\newtheorem{theorem}{Theorem}[section]
\newtheorem{proposition}[theorem]{Proposition}
\newtheorem{corollary}[theorem]{Corollary}
\newtheorem{lemma}[theorem]{Lemma}
\theoremstyle{definition}
\newtheorem{definition}[theorem]{Definition}
\newtheorem{definition*}{Definition}
\newtheorem{remark}[theorem]{Remark}
\numberwithin{equation}{section}
\newtheorem{Example}[theorem]{Example}
\begin{document}

\subjclass[2010]{53D05, 53D17.}
\keywords{ log symplectic, b-symplectic, Poisson cohomology, cosymplectic.}
\title{Poisson Cohomology of a class of Log Symplectic Manifolds} 

\author{Melinda Lanius}
\address{Department of Mathematics, University of Illinois, 1409 West Green Street, Urbana, IL 61801, USA.}
\email{lanius2@illinois.edu}

\begin{abstract} We compute the Poisson cohomology of a class of Poisson manifolds that are symplectic away from a collection $D$ of hypersurfaces. These Poisson structures induce a generalization of symplectic and cosymplectic structures, which we call a k-cosymplectic structure, on the intersection of hypersurfaces in $D$. 
\end{abstract}

\maketitle

\section{\bf Introduction}
Poisson cohomology, a way of parametrizing the deformations of a Poisson structure, is an important invariant in Poisson geometry. However, the computation of Poisson cohomology is quite difficult in general and few explicit results are known (\cite{Dufour}, p.43). One class of manifolds where the Poisson cohomology is known is the case of b-symplectic manifolds. A \emph{b-symplectic manifold}, defined by Victor Guillemin, Eva Miranda, and Ana Rita Pires \cite{Guillemin01}, is a $2n$-dimensional manifold $M$ equipped with a Poisson bi-vector $\pi$ that is non-degenerate except on a hypersurface $Z$ where there exist coordinates such that locally $Z=\left\{x_1=0\right\}$ and 
$$\displaystyle{\pi=x_1\dfrac{\partial}{\partial x_1}\wedge \dfrac{\partial}{ \partial y_1}+\sum_{i=2}^n \dfrac{\partial}{\partial x_i}\wedge \dfrac{\partial}{\partial y_i}.}$$ 
Recently, much work has been done studying the various facets of $b$-symplectic structures (a few select examples include \cite{Frejlich, Guillemin02, Marcut02}). In particular, Ioan M\u{a}rcut and Boris Osorno Torres (\cite{Marcut01}, Prop.1) and Guillemin, Miranda, and Pires (\cite{Guillemin01}, Thm. 30) showed that the Poisson cohomology $H^p_\pi(M)$ is isomorphic to the de Rham cohomology of a specific Lie algebroid, the $b$-tangent bundle, and hence,  $$H^p_\pi(M)\simeq H^p(M)\oplus H^{p-1}(Z).$$
 
In \cite{Lanius}, we introduced an approach to computing Poisson cohomology using the de Rham cohomology of a Lie algebroid, called the rigged algebroid. We will employ this method to study a class of log symplectic manifolds, a generalization of the b-symplectic case as formulated by Marco Gualtieri, Songhao Li, Alvaro Pelayo, and Tudor Ratiu \cite{Gualtieri02}. 

We will consider smooth manifolds M together with a finite set $D$ of smooth hypersurfaces $Z_i\subset M$ that intersect transversely. In other words, $D$ is a smooth normal crossing divisor on $M$. The $b$-tangent bundle over $(M,D)$, called the $\log$ tangent bundle ${}^{log}TM$ in  \cite{Gualtieri02}, is the vector bundle whose smooth sections are the vector fields tangent to $D$, that is $$\displaystyle{\left\{u\in\mathcal{C}^\infty(M;TM):u|_{Z}\in\mathcal{C}^\infty(Z,TZ)\text{ for all }Z\in D \right\}}.$$ This vector bundle is a Lie algebroid with anchor map the inclusion into $TM$ and with bracket induced by the standard Lie bracket on $TM$. 

Let $\tau$ denote a nonempty subset of $\left\{1,\dots,|D|\right\}$. In \cite{Gualtieri02}, they point out that the Lie algebroid cohomology of the $b$-tangent bundle over $(M,D)$ is 
\begin{equation}\label{eq:logcohom}{}^{b}H^p(M)\simeq H^p(M)\oplus\bigoplus_{|\tau|\leq p}H^{p-|\tau|}\big(\bigcap_{t\in \tau}Z_t\big).\end{equation} In the case when $D$ is a single hypersurface $Z\subset M$, we recover the de Rham cohomology of the $b$-tangent bundle originally computed by Rafe Mazzeo and Richard Melrose (\cite{MelroseGreenBook}, Prop. 2.49). 

\begin{definition*}\cite{Gualtieri02} A \emph{log symplectic structure} on $(M,D)$ is a closed  non-degenerate 2-form $\omega$ in the de Rham complex of the $b$-tangent bundle. In other words,  $$\omega\in{}^{b}\Omega^2(M)=\mathcal{C}^\infty\big(M;\wedge^2\big({}^{b}T^*M\big)\big)$$ satisfying $$d\omega=0\text{ and }\omega^n\neq 0.$$ The form $\omega$ induces a map $\omega^\flat$ between the $b$-tangent and $b$-cotangent bundles.\begin{center}$\xygraph{!{<0cm,0cm>;<1cm, 0cm>:<0cm,1cm>::}
!{(1.25,0)}*+{{}^{b}TM}="b"
!{(4.9,0)}*+{{{}^{b}T^*M}}="c"
!{(1.75,.1)}*+{{}}="d"
!{(4.25,.1)}*+{{}}="e"
!{(1.75,-.1)}*+{{}}="f"
!{(4.25,-.1)}*+{{}}="g"
"d":"e"^{\omega^{\flat}}
"g":"f"^{\pi^{\sharp}=(\omega^{\flat})^{-1}}}$\end{center} The inverse map is induced by a bi-vector $\pi\in\mathcal{C}^\infty(M;\wedge^2({}^{b}TM))$.  This bi-vector is called a \emph{log Poisson structure} on $(M,D)$.  

\end{definition*}

Log symplectic manifolds are a broad generalization of $b$-symplectic manifolds as developed in \cite{Guillemin01}. We restrict our attention to a class of structures that satisfy some nice geometric features of $b$-symplectic structures that are lost in the whole of the $\log$ symplectic category. In particular, Victor Guillemin, Eva Miranda, and Ana Rita Pires \cite{Guillemin01} showed that every $b$-symplectic form induces a cosymplectic structure $$(\theta,\eta)\in\Omega^1(Z)\times\Omega^2(Z)$$ on its singular hypersurface $Z$. That is, there exists a pair of closed forms $\theta, \eta$ on $Z$ such that $$\theta\wedge\eta^{n-1}\neq 0$$ where the dimension of $Z$ is $2n-1$. We will consider certain $\log$ symplectic structures that induce a generalization of cosymplectic structures on the intersection of any subset of hypersurfaces in $D$. In particular we would like the induced cosymplectic structures on each $Z\in D$ to intersect `nicely'. 

\begin{Example}\label{motivatingexample} As a motivating example, let us examine some cases on the 4-torus $\mathbb{T}^4$ identified as $\mathbb{T}^2\times\mathbb{T}^2$ with angular coordinates $(a_1,a_2)$ and $(b_1,b_2)$ respectively. Let $D=\left\{Z_1,Z_2\right\}$ where $Z_1$ is the zero set of $\sin(a_1)$ and $Z_2$ is the zero set of $\sin(a_2)$. The $b$-tangent bundle is generated by the vector fields $$\sin(a_1)\dfrac{\partial}{\partial a_1},\hspace{.25ex}\sin(a_2)\dfrac{\partial}{\partial a_2},\hspace{.25ex}\dfrac{\partial}{\partial b_1},\hspace{.25ex}\dfrac{\partial}{\partial b_2}.$$ 

We will look at three symplectic forms on this bundle. As an example of the behavior we desire, consider the $\log$ symplectic forms  $$\omega_I=\dfrac{da_1}{\sin(a_1)}\wedge\dfrac{da_2}{\sin(a_2)}+db_1\wedge db_2\hspace{2ex}$$\text{ and }$$\omega_{II}=\dfrac{da_1}{\sin(a_1)}\wedge db_1-\dfrac{da_2}{\sin(a_2)}\wedge db_2.$$ The corresponding bi-vectors are respectively given by $$\pi_I=\sin(a_2)\sin(a_1)\dfrac{\partial}{\partial a_2}\wedge\dfrac{\partial}{\partial a_1}+\dfrac{\partial}{\partial b_2}\wedge \dfrac{\partial }{\partial b_1}$$
and$$\pi_{II}=\sin(a_1)   \dfrac{\partial }{\partial b_1}\wedge\dfrac{\partial}{\partial a_1}-\sin(a_2)\dfrac{\partial}{\partial b_2}\wedge\dfrac{\partial}{\partial a_2}.$$ \vspace{1ex}

\noindent \begin{minipage}{0.6\linewidth}

\hspace{3ex}In the symplectic foliation induced by $\pi_I$, the submanifold $\left\{a_1=0,a_2=0\right\}$ is a symplectic leaf with symplectic form $db_1\wedge db_2$. On the other hand, $\pi_{II}$ induces a foliation on  $\left\{a_1=0,a_2=0\right\}$ of points. To see this, note that the leaves associated to $\pi_{II}$ on $\left\{a_1=0,a_2=0\right\}$ are given by the distribution $\ker db_1\cap \ker db_2=\left\{0\right\}$.  

\hspace{3ex} Thus we say $\omega_{I}$ induces the symplectic form $db_1\wedge db_2$ on $\left\{a_1=0,a_2=0\right\}$ and that $\omega_{II}$ induces the closed 1-forms $db_1$ and $db_2$ satisfying $db_1\wedge db_2\neq 0.$ We call the pair $(db_1,db_2)$ a $2$-cosymplectic structure (see Definition \ref{kco}). \end{minipage}
\hspace{-8ex}\begin{minipage}{0.4\linewidth}
 
 \hspace{11ex}{\bf $\begin{array}{c} \text{ symplectic foliation}\\ \text{ on }\left\{a_1=0,a_2=0\right\}\end{array}$}
\begin{multicols}{2}
\begin{flushright}$~$\vspace{2ex}

$\pi_I$ \vspace{9ex} 

$\pi_{II}$ \end{flushright}\columnbreak

\includegraphics[scale=.3]{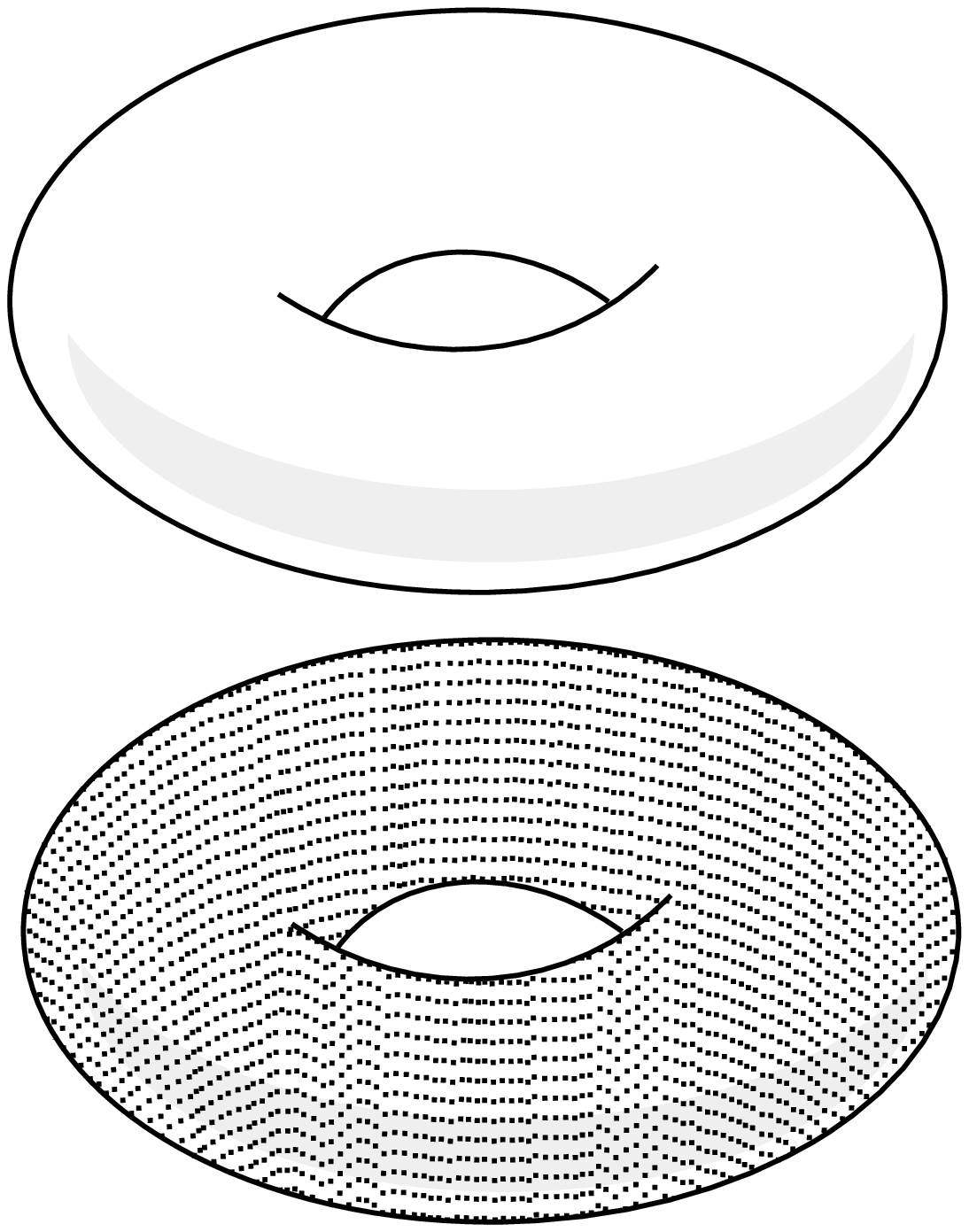} \end{multicols} \end{minipage}\vspace{1ex} 

We are interested in studying $\log$ symplectic forms that induce structures like these at the intersection of elements in $D$. Not all log symplectic forms on the 4-torus will induce a symplectic or $2$-cosymplectic structure. For instance, consider the log symplectic form $$\omega_{III}=\cos(b_1)\omega_I+\sin(b_1)\omega_{II}.$$ The inverse is given by $$\pi_{III}=\cos(b_1)\pi_I+\sin(b_1)\pi_{II}.$$

\noindent \begin{minipage}{0.6\linewidth}

\hspace{3ex} In the case of $\pi_{III}$, the induced folitation on $\left\{a_1=0,a_2=0\right\}$ is not regular. There are two open leaves  $\left\{a_1=0,a_2=0\right\}\setminus\left\{b_1=\pi/2,3\pi/2\right\}.$ At $\left\{b_1=\pi/2,3\pi/2\right\}$, the foliation is given by the distribution $\ker db_1\cap \ker db_2$ and its leaves are points. Because $\cos(b_1)db_1\wedge db_2$ vanishes at $b_1=\pi/2$ and $3\pi/2$, the form $\omega_{III}$ does not induce a global symplectic structure on $\left\{a_1=0,a_2=0\right\}$. 
\end{minipage}
\hspace{-7ex}\begin{minipage}{0.4\linewidth}
 
 \hspace{10ex}{\bf $\begin{array}{c} \text{ symplectic foliation}\\ \text{ on }\left\{a_1=0,a_2=0\right\}\end{array}$}
\begin{multicols}{2}
\begin{flushright}$~$\vspace{2ex}

$\pi_{III}$\end{flushright}\columnbreak

\includegraphics[scale=.3]{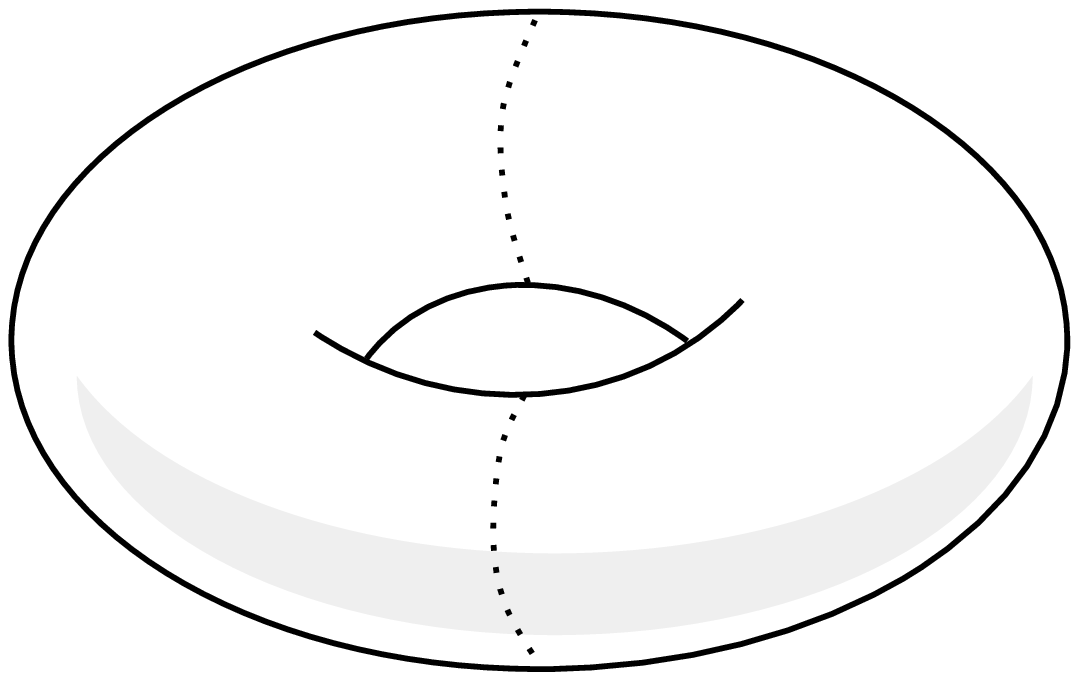} \end{multicols} \end{minipage}\vspace{1ex} 

 Because $\sin(b_1)$ vanishes at $b_1=0$ and $\pi$, the pair   $(\sin(b_1)db_1,\sin(b_1)db_2)$ is not a  2-cosymplectic structure on $\left\{a_1=0,a_2=0\right\}$. Since the foliation on $\omega_{III}$ does not arise from a global structure, such as symplectic or $2$-cosymplectic, on $\left\{a_1=0,a_2=0\right\}$, we will exclude forms such as this from our discussion. \end{Example} 

\begin{definition}\label{kco} A $k$-cosymplectic structure\footnote{The term k-cosymplectic has also been used in classical field theories, with a different meaning, see e.g. \cite{Cappelletti} for details.} on a   $k+2\ell$ dimensional manifold $M$ is a family $(\alpha_i,\beta)$ of $k$ closed one forms $\alpha_i\in\Omega^1(M)$ and a closed two form $\beta\in\Omega^2(M)$ such that

\centerline{$\left(\wedge_{i}\alpha_i\right)\wedge\beta^{\ell}\neq 0.$}\end{definition}

By restricting our attention to log forms satisfying certain cohomological restrictions, we guarantee the existence of such a structure at the intersection of any subset of $D$. 

To any $\log$ symplectic form $\omega$ we can associate a \emph{decomposition} of its cohomology class $[\omega]\in {}^{b}H^2(M)$ in terms of the isomorphism (\ref{eq:logcohom}): 
$$(a,\underbrace{b_1,\dots,b_k}_{b_i\in H^{1}(Z_i)}, \underbrace{c_{1,2},\dots,c_{k-1,k}}_{c_{i,j}\in H^{0}(Z_i\cap Z_j)})\in H^2(M)\oplus\bigoplus_{i} H^{1}(Z_i)\oplus\bigoplus_{i\neq j} H^{0}(Z_i\cap Z_j).$$ 

\begin{definition} Let $\omega$ be a $\log$ symplectic form on a manifold $(M,D)$. The structure $\omega$ is \emph{partitionable} if its $b$-de Rham cohomology class decomposition $$(a,b_1,\dots,b_k,c_{1,2},\dots,c_{k-1,k})$$ satisfies the following conditions. 
\begin{enumerate}
    \item If $b_{s}\neq 0$, then $c_{i,s}=0$ for all $i$. \vspace{1ex}
    \item Consider the inclusions \begin{center}$\xygraph{!{<0cm,0cm>;<1cm, 0cm>:<0cm,1cm>::}
!{(1,1.5)}*+{Z_s\cap Z_t}="b"
!{(0,0)}*+{{Z_s}}="c"
!{(2,0)}*+{{Z_t}}="d"
"b":"c"^{i_s}
"b":"d"_{i_t}}$.\end{center} If $c_{s,t}\neq 0$, then $i^*_sb_{s}=i^*_tb_{t}=0$, and $c_{s,j}=c_{i,t}=0$ for all $j\neq t$, $i\neq s$. 
\end{enumerate}\end{definition} 
 
\begin{remark}\label{partitionremark} Every partitionable type form $\omega$ determines a partition $\Lambda_D$ of the set $D$. By the definition of partitionable, if $b_j=0$ for $Z_j\in D$, then there must exist exactly one element $Z_{j^\prime}\in D$ such that $c_{j,j^\prime}\neq 0$. The tuple $Z_j,Z_{j^\prime}$ forms a pair which we can label $Z_{x_i},Z_{y_i}$. We call this a pair of type $x$ and type $y$. If $b_j\neq 0$ for $Z_j\in D$, then we will label $Z_j$ in the form $Z_{z_i}$. We call these hypersurfaces of type $z$. 
Thus the set $D$ admits a relabeling  $$Z_{x_1},Z_{y_1},\dots,Z_{x_k},Z_{y_k},\hspace{3ex}Z_{z_1},\dots,Z_{z_\ell}.$$  Up to switching the labels $x_i$ and $y_i$, and permuting the set $\left\{1,\dots,k\right\}$ and $\left\{1,\dots,\ell\right\}$, this partition is unique.  

As we saw from $\omega_I$ in Example \ref{motivatingexample}, the intersection $Z_{x_i}\cap Z_{y_i}$ is a leaf of the induced symplectic foliation. The log symplectic form $\omega_{II}$ showed that $Z_{z_i}\cap Z_{z_j}$, when non-empty, has a codimension 2 foliation. Intuitively, intersections of pairs $Z_{x_i},Z_{y_i}$ will be symplectic leaves. The symplectic leaves drop in dimension on intersections of hypersurfaces of type $Z_{z_j}$. \end{remark}

This partition $\Lambda_D$ of $D$ is vital in our computation and statement of the Poisson cohomology. Similar to the b-symplectic case, the Poisson cohomology of partitionable $\log$ symplectic structures will involve the de Rham cohomology of the $b$-tangent bundle, however the two will not always be isomorphic.  
\begin{theorem}\label{theorem01} Let $(M,\pi)$ be a partitionable $\log$ symplectic structure for $$D=\left\{Z_{x_1},Z_{y_1},\dots,Z_{x_k},Z_{y_k},Z_{z_1},\dots,Z_{z_\ell}\right\}.$$ Let $\mathscr{M}$ denote all collections of sets $I,J,K,L$ satisfying $$I,J,K\subseteq\left\{1,\dots,k\right\}, L\subseteq\left\{1,\dots,\ell\right\}$$ $$\text{ such that } I\neq\emptyset\text{ and } I\cap J=I\cap K =\emptyset.$$ 
Set $$m=2|I|+|J|+|K|+|L|$$  and let $v_i$ denote $Z_{x_i}\cap  Z_{y_i}$. 
Then the Poisson cohomology  $H_{\pi}^p(M)$ of $(M,\pi)$ is $${}^{b}H^p(M)\oplus\bigoplus_{\mathscr{M}}H^{p-m}(\bigcap\underbrace{Z_{x_i}\cap Z_{y_i}\cap Z_{x_j}\cap Z_{y_k}\cap Z_{z_\ell}}_{i\in I,~j\in J,~ k\in K,~\ell\in L};\bigotimes\underbrace{|N_{v_i}^*Z_{x_i}|^{-1}\otimes|N_{v_i}^*Z_{y_i}|^{-1}}_{i\in I})$$ for $m\leq p$.  \end{theorem}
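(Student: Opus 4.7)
The plan is to apply the rigged algebroid method introduced in \cite{Lanius}: identify a Lie algebroid $\mathcal{A}$ with anchor into $TM$ such that the map induced by $\pi$ extends to an isomorphism $\mathcal{A}^{*}\to\mathcal{A}$ of vector bundles. Once such $\mathcal{A}$ is found, $\omega^{\flat}$ transports the Poisson differential to the Lie algebroid de Rham differential, giving $H^{p}_{\pi}(M)\cong H^{p}(\mathcal{A})$. Generically, and along each type-$z$ hypersurface, the $b$-tangent bundle already does the job, so $\mathcal{A}$ should coincide with ${}^{b}TM$ there. Near a pair intersection $v_{i}=Z_{x_{i}}\cap Z_{y_{i}}$, however, the symplectic leaf of $\pi$ persists at codimension $2$ while the map $\pi^{\sharp}:{}^{b}T^{*}M\to{}^{b}TM$ fails to be surjective, so $\mathcal{A}$ must be enlarged by auxiliary sections along $v_{i}$. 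This enlargement is what produces the conormal density factors $|N_{v_i}^*Z_{x_i}|^{-1}\otimes|N_{v_i}^*Z_{y_i}|^{-1}$ in the statement.

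With $\mathcal{A}$ constructed, I would compute $H^{p}(\mathcal{A})$ by a Künneth-type local decomposition followed by a Mayer--Vietoris argument. The cohomological hypotheses defining a partitionable form are designed to ensure that, in a tubular neighborhood of any iterated intersection, $\omega$ is cohomologous (in the $b$-complex) to a sum of standard local models: one smooth symplectic summand, a $b$-de Rham summand for each type-$z$ hypersurface meeting the stratum, and a pair summand of $\omega_{I}$- or $\omega_{II}$-type for each pair $(Z_{x_{i}},Z_{y_{i}})$ meeting it. Consequently the local rigged complex factors as a tensor product, and on each factor the cohomology is already known: the smooth and $z$-type factors contribute to the summand ${}^{b}H^{p}(M)$ via the Mazzeo--Melrose-type formula (\ref{eq:logcohom}), while each pair factor contributes a density-shifted piece obtained from the two-torus computation of \cite{Lanius}.

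Globally, a Mayer--Vietoris argument over a cover of $M$ adapted to the strata of $D$ assembles these local pieces. The index set $\mathscr{M}$ parametrizes precisely the non-trivial stratum contributions: $I$ records the pairs contributing \emph{doubly} (yielding the density twist and a codimension shift of $2|I|$), the sets $J$ and $K$ (disjoint from $I$, since a doubled pair already contains both of its members) record pairs contributing only $Z_{x_{j}}$ or only $Z_{y_{k}}$, and $L$ records the selected type-$z$ hypersurfaces. The codimension $m=2|I|+|J|+|K|+|L|$ of the resulting stratum matches the observed degree shift, and the bound $m\leq p$ is exactly the non-triviality range for $H^{p-m}$ of the intersection.

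The central obstacle is the construction of $\mathcal{A}$ together with the normal form on which the Künneth reduction rests. Translating partitionability, a condition on the $b$-cohomology class $[\omega]$, into a pointwise splitting of $\omega$ at every iterated intersection requires care, especially at mixed strata involving both pair-type and type-$z$ hypersurfaces; one needs to show that the vanishing of the relevant $c_{s,t}$ or $b_{s}$ coefficients kills all potential cross terms in a $b$-Moser argument. Once this normal form is in place, the residue computation producing the conormal density factor at each $v_{i}$ follows the template of \cite{Lanius}, and the remaining assembly is combinatorial bookkeeping dictated by the partition $\Lambda_{D}$.
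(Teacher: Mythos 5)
Your high-level strategy coincides with the paper's: pass to the rigged algebroid $\mathcal{R}$ so that $H^p_\pi(M)\simeq{}^{log\mathcal{R}}H^p(M)$, use partitionability to obtain a normal form for $\omega$ near each stratum (Proposition \ref{prop01}), and let the index set $\mathscr{M}$ do the stratum bookkeeping. But the computational core of your plan --- a local K\"unneth factorization followed by a Mayer--Vietoris assembly --- is both unexecuted and genuinely different from what the paper does, and it sits exactly where the difficulty lies. The paper never patches local computations. It builds $\mathcal{R}$ as the last term of a filtration $\mathcal{A}_1\subset\dots\subset\mathcal{A}_m\subset\mathcal{B}_1\subset\dots\subset\mathcal{B}_k=\mathcal{R}$ of Lie algebroids, each obtained from the previous one by rescaling along a single $Z_{z_j}$ (by $\ker\widetilde{\alpha}_j$) or a single pair $Z_{x_i},Z_{y_i}$; at each step a good tubular neighborhood splits the de Rham complex globally as ${}^{\mathcal{A}_{i-1}}\Omega^*(M)\oplus\mathscr{C}^*$, so each new summand is the cohomology of a globally defined quotient complex over the relevant hypersurface or intersection. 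The factors $|N^*_{v_i}Z_{x_i}|^{-1}\otimes|N^*_{v_i}Z_{y_i}|^{-1}$ arise precisely because identifying the coefficient $A_{00}$ with a form on $v_i$ depends on the chosen defining functions; if you compute locally and then glue, you must propagate this trivialization dependence through the Mayer--Vietoris maps, and your proposal supplies no mechanism for doing so.

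Two further concrete gaps. First, you say $\mathcal{A}$ must be ``enlarged by auxiliary sections along $v_i$''; it is the dual $\mathcal{R}^*$ that is enlarged (it contains $dx_i/(x_iy_i)$), while the sections of $\mathcal{R}$ itself are further constrained ($x_iy_i\,\partial_{x_i}$ rather than $x_i\,\partial_{x_i}$), and getting this direction right is what makes ${}^{\mathcal{B}_{i-1}}\Omega^*(M)\to{}^{\mathcal{B}_i}\Omega^*(M)$ an inclusion of complexes. Second, you do not anticipate that after earlier rescalings the form $\alpha_j$ must be lifted to an $\mathcal{A}_{j-1}$-form $\widetilde{\alpha}_j$, and one has to verify that $\ker\widetilde{\alpha}_j$ is still a subbundle at $Z_{z_j}\cap Z_{z_{j'}}$ (a lifted form can degenerate there even when the original does not); the paper checks this in the coordinates provided by the normal form. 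Until the algebroid construction and the globalization of the density-twisted contributions are actually carried out, the plan names the right ingredients but does not yet constitute a proof.
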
 

\begin{remark} The factor of ${}^{b}H^p(M)$ appearing in the cohomology is encoding purely topological information about the $b$-tangent bundle over the manifold $M$ and the set $D$. The remaining factors appearing in the cohomology are encoding specific information about the bi-vector $\pi$. \end{remark}

The proof of Theorem \ref{theorem01} appears in Section 3. In Section 2 we discuss aspects of the geometry of partitionable $\log$ symplectic structures and provide examples.  

\vspace{3ex}

{\bf Acknowledgements:} I greatly benefited from attending the 2016 Poisson geometry meeting Gone Fishing held at the University of Colorado at Boulder. Travel support was provided by NSF Grant DMS 1543812 for the Gone Fishing 2016 Conference. I am particularly grateful to Ioan M\u{a}rcut for inspiring this project with his suggestion that I apply my approach for computing Poisson cohomology to other settings. I am grateful to Pierre Albin for carefully reading several versions of this paper. Travel support was provided by Pierre Albin's Simon's Foundation grant \# 317883.

\section{\bf Partitionable log symplectic structures} 

In this section we will discuss various features of partitionable $\log $ Poisson structures. 
\subsection{Local normal forms and $k$-cosymplectic structures} 
\begin{definition} A map $\phi:(M_1, D_1)\to(M_2, D_2)$ is a \emph{$b$-map} if  $$\phi^*:{}^{b}\Omega^1(M_2)|_{M_2\setminus D_2}\to{}^{b}\Omega^1(M_1)|_{M_1\setminus D_1}$$ extends to a map $$\phi^*:{}^{b}\Omega^1(M_2)\to{}^{b}\Omega^1(M_1).$$ Given two log symplectic forms $\omega_1$, $\omega_2$ on $(M,D)$, a log-\emph{symplectomorphism} is a $b$-map $\phi:M\to M$ such that $\phi^*\omega_2=\omega_1$. 
\end{definition} 

As noted in Remark \ref{partitionremark}, given a partitionable $\log$ symplectic form $\omega$ on a manifold $(M,D)$, the cohomological decomposition of $[\omega]$ gives us a partition $\Lambda_D$ of the set $D$ as $Z_{x_1},Z_{y_1},\dots,Z_{x_k},Z_{y_k}, Z_{z_1},\dots,Z_{z_\ell}.$ 

Given a subset $S$ of a divisor $D$, we call  $$X_S=\bigcap_{Z\in S} Z$$ a \emph{maximal intersection} if $X$ is non-empty and if $Z \cap X_S=\emptyset$ for all $Z\in D\setminus S$. 

We can assign a subpartition $\Lambda_S$ to the subset $S$  according to the decomposition $(a,b_1,\dots,b_k,c_{1,2},\dots,c_{k-1,k})$ of $[\omega]\in {}^{b}H^2(M)|_{X_S}$.  In particular, hypersurfaces of type $z$ in $\Lambda_D$ will remain type $z$ in the subpartition $\Lambda_S$. However if there is a hypersurface $Z_{x_i}$ of type $x$ in $S$ and its type $y$ counterpart $Z_{y_i}$ is not in $S$, then $Z_{x_i}$ will have a type $z$ designation in the subpartition $\Lambda_S$.  

We will show that there are hypersurface defining functions, i.e. $x\in\mathcal{C}^\infty(M)$ such that $Z_x=\left\{x=0\right\}$ and $dx(p)\neq 0$ for all $p\in Z$, so we can express $\omega$ near $X_S$ as 
\begin{equation}\label{eq:normal}\omega_0=  \sum_{i=1}^k\dfrac{dx_i}{x_i}\wedge\dfrac{dy_i}{y_i}+\sum_{j=1}^{m}\dfrac{dz_j}{z_j}\wedge\alpha_j+\delta.\end{equation} where  $\alpha_j\in\Omega^1(X_S)$ is a closed form representing  $b_{z_j}$, and $\delta\in\Omega^2(X_S)$ is a closed form representing $a$.  

\begin{proposition}\label{prop01} Let $\omega$ by a partitionable $\log$ symplectic form on a manifold $(M,D)$. Let  $S$  be any subset of $D$ such that $\displaystyle{X_S=\bigcap_{Z\in S} Z}$ is a \emph{maximal intersection} and let $\Lambda_S$ be a subpartition of $S$. If $(a,b_1,\dots,b_k,c_{1,2},\dots,c_{k-1,k})$ is a decomposition of $[\omega]\in {}^{b}H^2(M)|_{X_S}$, then there exist \begin{itemize}\item hypersurface defining functions $x_i$, $y_i$, $z_i$ partitioned according to $\Lambda_S$, \item a tubular neighborhood $U\supset X_S$, and \item  $\alpha_j\in\Omega^1(X_S)$ a closed form representing  $b_{z_j}$ and $\delta\in\Omega^2(X_S)$ a closed form representing $a$, \end{itemize}
such that on $U$ there is a $\log$-symplectomorphism pulling $\omega$ back to (\ref{eq:normal}).
\end{proposition}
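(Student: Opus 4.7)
The plan is a Moser-type local normal form argument in the log (b-)category; the partitionable hypothesis will be used to pin down the shape of the model. First, the geometric setup: transverse intersection of the hypersurfaces in $S$ together with a tubular neighborhood theorem identifies an open $U \supset X_S$ with a neighborhood of the zero section in the normal bundle $NX_S$, which splits as a direct sum of the normal line bundles to each $Z \in S$. Trivializing these line bundles produces hypersurface defining functions on $U$, which the partition $\Lambda_S$ labels as $x_i, y_i, z_j$ according to type, and which will serve as the defining functions appearing in (\ref{eq:normal}).

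Second, I assemble the model $\omega_0$. From the decomposition $(a, b_1, \dots, b_k, c_{1,2}, \dots, c_{k-1,k})$, pick closed representatives: $\delta \in \Omega^2(X_S)$ for $a|_{X_S}$ and $\alpha_j \in \Omega^1(X_S)$ for $b_{z_j}|_{X_S}$. The partitionable hypothesis then forces every nonzero $c_{s,t}$ to live on a matched pair $Z_{x_i} \cap Z_{y_i}$ as a locally constant function, and every nonzero $b_s$ to live on an unpaired type-$z$ hypersurface. Absorbing the constants $c_{x_i, y_i}$ into rescalings of the defining functions $y_i$, I obtain $\omega_0$ in exactly the form (\ref{eq:normal}) with $[\omega_0] = [\omega]$ in ${}^bH^2(U)$; non-degeneracy of $\omega_0$ along $X_S$ follows because the data $(\alpha_j, \delta)$ inherits the $k$-cosymplectic condition from $\omega|_{X_S}$.

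Third, a Moser argument. Write $\omega - \omega_0 = d\eta$ on $U$ with $\eta \in {}^b\Omega^1(U)$; by matching leading terms at $X_S$ and shrinking $U$, one may arrange $\eta$ to vanish on $X_S$. The interpolation $\omega_t = (1-t)\omega_0 + t\omega$ is non-degenerate at the endpoints, so by openness of non-degeneracy and a further shrinkage of $U$ it is non-degenerate for all $t \in [0,1]$. Define $V_t \in {}^b TU$ by $\iota_{V_t}\omega_t = -\eta$; the b-isomorphism $\omega_t^\flat$ guarantees $V_t$ is a genuine b-vector field, tangent to each $Z \in S$ and vanishing along $X_S$. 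Integrating $V_t$ produces a b-map $\phi_1 : U \to U$ fixing $X_S$ pointwise, with $\phi_1^*\omega = \omega_0$ by the standard Moser identity.

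The principal obstacle is the second stage: producing the clean model (\ref{eq:normal}) from the abstract cohomological decomposition requires that no cross-term $\frac{dx_s}{x_s} \wedge \frac{dy_t}{y_t}$ with $s \neq t$ arise and that every $b_s$ class be concentrated on an unpaired hypersurface. This is exactly the content of conditions (1) and (2) in the definition of partitionable, and without them the normal form would have to include general wedges coupling different pairs, spoiling the subsequent Moser step.
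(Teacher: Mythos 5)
Your proposal follows essentially the same route as the paper: use the partitionable conditions to reduce $\omega|_{X_S}$, via its cohomological decomposition, to the model (\ref{eq:normal}), and then run a relative Moser argument with a primitive vanishing along $X_S$ so that the resulting flow is a log-symplectomorphism fixing $X_S$ and preserving $D$. The only point where the paper is more explicit is in producing that primitive: since $\omega-\omega_0=\sum_j\frac{dz_j}{z_j}\wedge(\alpha_j-\widetilde{\alpha}_j)+(\delta-\widetilde{\delta})$ has no extra singular terms, the relative Poincar\'e lemma applied to the smooth coefficients yields $\mu=\sum_j\frac{dz_j}{z_j}\mu_j+\sigma$ with $\mu_j|_{X_S}=\sigma|_{X_S}=0$, which is precisely what your ``matching leading terms'' step amounts to.
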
 

\begin{proof} 

Let $\omega$ be a partitionable log symplectic form on $(M,D)$. Given a maximal intersection $X_S$ given by $S\subseteq D$, let $(a,b_1,\dots,b_k,c_{1,2},\dots,c_{k-1,k})$ be a decomposition of $[\omega]\in {}^{b}H^2(M)|_{X_S}$. Let $x_i$, $y_i$, $z_i$ be hypersurface defining functions  partitioned according to $\Lambda_S$.  
By the isomorphism from equation (\ref{eq:logcohom}), in these coordinates $\omega|_{X_S}=$ $$\sum_{i,j}\left(\dfrac{dx_i}{x_i}\wedge\left(a_{ij}\dfrac{dx_j}{x_j}+b_{ij}\dfrac{dy_j}{y_j}+c_{ij}\dfrac{dz_j}{z_j}\right)+\dfrac{dy_i}{y_i}\wedge\left(d_{ij}\dfrac{dy_j}{y_j}+e_{ij}\dfrac{dz_j}{z_j}\right)\right)$$ $$+\sum_{i,j}f_{ij}\dfrac{dz_i}{z_i}\wedge\dfrac{dz_j}{z_j}+\sum_k\left(\dfrac{dx_k}{x_k}\wedge A_k+\dfrac{dy_y}{y_k}\wedge B_k+\dfrac{dz_k}{z_k}\wedge C_k\right)+\delta.$$

From $d\omega=0$, it follows that $a_{ij},b_{ij},c_{ij},d_{ij},e_{ij},f_{ij}$ are all closed $0$-forms and thus are real numbers. By the definition of partitionable, the only non-zero numbers among these are $b_{ii}$. Further, by the definition of partitionable, the one-forms $A_k$ and $B_k$ satisfy $\displaystyle{A_k|_{Z_{x_k}\cap Z_{y_k}}=B_k|_{Z_{x_k}\cap Z_{y_k}}=0}$. Thus $A_k|_{X_S}=B_k|_{X_S}=0$. 

Thus under an appropriate relabeling and change of $X_S$ defining functions $$\omega|_{X_S}=  \sum_{i=1}^k\dfrac{dx_i}{x_i}\wedge\dfrac{dy_i}{y_i}+\sum_{j=1}^{m}\dfrac{dz_j}{z_j}\wedge\alpha_j+\delta.$$ 

Now we will proceed by the standard relative Moser argument (See \cite{Cannas} Sec. 7.3 for the smooth setting, and \cite{Scott} Thm 6.4 for the $b$-symplectic version).  Pick a tubular neighborhood $U_0$ of $X_S$. Define $\omega_0$ to be $\omega|_{X_S}$ pulled back to $U$. Then $$\omega-\omega_0=\sum_{j=1}^m\dfrac{dz_j}{z_j}\wedge(\alpha_j-\widetilde{\alpha}_j)+\delta-\tilde{\delta}.$$

Since the form $\omega-\omega_0$ is closed on $U_0$, and  $(\omega-\omega_0)|_{X_s}=0$ and $\delta-\tilde{\delta}=0$, by the relative Poincar\'{e} Lemma, there exist primitives $\mu_j$ of $\alpha_j-\widetilde{\alpha}_j$ and a primitive $\sigma$ of $\delta-\tilde{\delta}$ such that $\mu_j|_{X_s}=\sigma|_{X_S}=0$. Define $$\mu=\sum_{j=1}^m\dfrac{dz_j}{z_j}\mu_j+\sigma.$$ 

Then $\omega-\omega_0=d\mu$. Let $\omega_t=(1-t)\omega_0+t\omega$. Then $\dfrac{d\omega_t}{d t}=\omega-\omega_0=d\mu$.  Because $\mu$ is a log one form, the vector field defined by $i_{v_t}\omega_t=-\mu$ is a log vector field and its flow fixes the divisor $D$. Further, $v_t=0$ on $X_s$. Thus we can integrate $v_t$ to an isotopy that is the identity on $X_{S}$ and fixes $D$. This isotopy is the desired $\log$-symplectomorphism. \end{proof} 

This proposition gives us $k$-cosymplectic structures on every intersection of subsets of $D$: 

\begin{proposition}\label{inducedstructure}
Let $\omega$ be a partitionable $\log$ symplectic form $\omega$ on a manifold $(M,D)$. For any set $S\subseteq D$ such that $\displaystyle{\bigcap_{z\in S}Z}$ is nonempty, let $\displaystyle{X= \bigcap_{z\in S}Z}$ away from higher order intersections. The form $\omega$ induces a $\ell$-cosymplectic structure on $X$ where $\ell$ is the number of type $z$ forms in $\Lambda_S$. 
\end{proposition}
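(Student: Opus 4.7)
The plan is to leverage the local normal form of Proposition \ref{prop01}. First I restrict attention to the open set $X^{\circ} := X \setminus \bigcup_{Z \in D \setminus S} Z$, the part of $X$ away from higher order intersections. At every point $p \in X^\circ$ a neighborhood of $p$ meets only the hypersurfaces in $S$, and on this neighborhood $p$ lies in a maximal intersection for the restricted divisor, so Proposition \ref{prop01} applies and produces coordinates in which
$$\omega = \sum_{i=1}^{k'} \frac{dx_i}{x_i} \wedge \frac{dy_i}{y_i} + \sum_{j=1}^{\ell} \frac{dz_j}{z_j} \wedge \alpha_j + \delta,$$
where $k'$ is the number of complete $(x,y)$-pairs contained in $S$, $\ell$ is the number of type-$z$ hypersurfaces in $\Lambda_S$, each $\alpha_j$ is a closed 1-form on the local slice of $X^\circ$, and $\delta$ is a closed 2-form there.

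Next I produce the cosymplectic structure from these local data. The 1-forms $\alpha_j$ can be characterized intrinsically as iterated Mazzeo--Melrose residues of $\omega$ along the type-$z$ hypersurfaces of $\Lambda_S$, followed by pullback to $X^\circ$; the 2-form $\delta$ is the smooth remainder after subtracting the polar log contributions, or equivalently the image of $\omega$ under the natural restriction ${}^{b}\Omega^2(M) \to \Omega^2(X^\circ)$ composed with a projection killing the residue terms. Both constructions are invariantly defined on $X^\circ$ and agree with the local normal-form expressions on each chart, so the $\alpha_j$ and $\delta$ glue to global closed forms on $X^\circ$. Closedness follows from $d\omega = 0$ together with the fact that residue commutes with the exterior derivative in the $b$-de Rham complex.

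For the nondegeneracy, observe that in the local normal form
$$\omega^n = c \cdot \bigwedge_{i=1}^{k'} \frac{dx_i}{x_i} \wedge \frac{dy_i}{y_i} \wedge \bigwedge_{j=1}^{\ell} \frac{dz_j}{z_j} \wedge \alpha_1 \wedge \cdots \wedge \alpha_\ell \wedge \delta^{m}$$
for a positive combinatorial constant $c$, where $2m + \ell = \dim X$. Since $\omega$ is log-symplectic the left-hand side is nonvanishing, which forces $\alpha_1 \wedge \cdots \wedge \alpha_\ell \wedge \delta^m \neq 0$ on $X^\circ$. This is precisely the defining condition of an $\ell$-cosymplectic structure from Definition \ref{kco}, so the tuple $(\alpha_1, \ldots, \alpha_\ell, \delta)$ is the desired induced structure.

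The main obstacle I anticipate is the globalization step: the normal forms of Proposition \ref{prop01} are not unique, since they depend on a choice of hypersurface defining functions and a Moser isotopy, so one must verify that the extracted $\alpha_j$ and $\delta$ on $X^\circ$ are independent of these choices. The cleanest way is via the intrinsic residue characterization sketched above, after which closedness and nondegeneracy follow formally from the structure of the $b$-de Rham complex and the nonvanishing of $\omega^n$.
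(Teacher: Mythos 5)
Your proposal is correct and follows essentially the same route as the paper: restrict the normal form of Proposition \ref{prop01} to $X$ and extract the wedge condition $\big(\bigwedge_j\alpha_j\big)\wedge\delta^{m}\neq 0$ from the nonvanishing of $\omega^n$ in ${}^{b}\Omega^{2n}(M)$. The extra care you take with globalization via residues is reasonable but not needed in the paper's treatment, since Proposition \ref{prop01} already furnishes globally defined closed forms $\alpha_j\in\Omega^1(X_S)$ and $\delta\in\Omega^2(X_S)$ representing fixed cohomology classes.
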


\begin{proof} For any $S\subseteq D$ such that $\displaystyle{\bigcap_{z\in S}Z}$ is nonempty, let $\displaystyle{X= \bigcap_{z\in S}Z}$ away from higher order intersections. By equation (\ref{eq:normal}), $$\omega|_
X=  \sum_{i=1}^k \dfrac{dr_i}{r_i}\wedge\dfrac{ds_i}{s_i}+\sum_{j=1}^m\dfrac{dt_j}{t_j}\wedge\theta_j+\beta$$ for closed $\theta_j\in\Omega^1(X)$ and closed $\beta\in\Omega^2(X)$. 
By the non-degeneracy of $\omega$, $$0\neq \wedge^n\omega|_
X=\big(\bigwedge_i\dfrac{dr_i}{r_i}\wedge\dfrac{ds_i}{s_i}\big)\wedge\big(\bigwedge_j \dfrac{dt_j}{t_j}\wedge\theta_j\big)\wedge\beta^{n-k-m}.$$ Thus $$\big(\bigwedge_j \theta_j\big)\wedge\beta^{n-k-m}\neq 0$$ and $(\theta_j,\beta)$ is an $\ell$-cosymplectic structure on $X$. 
\end{proof}

By the standard symplectic linear algebra argument (for instance see \cite{Cannas} Sec. 1.1) we can express $\omega$ at point $p\in D$ as $$\omega_p=\sum_{i=1}^k \dfrac{dx_i}{x_i}\wedge\dfrac{dy_i}{y_i}+\sum_{j=1}^m\dfrac{dz_j}{z_j}\wedge ds_j+\sum_{k=1}^n dp_k\wedge dq_k.$$ By the proof of Proposition \ref{prop01}, a relative Moser's argument gives an analogue of Darboux's theorem for partitionable $\log$ symplectic structures. 

\begin{corollary} 
Let $\omega$ be a partitionable $\log$ symplectic form $\omega$ on a manifold $(M,D)$. Let $p\in D$ and let $S$ be the subset of $D$ of hypersurfaces containing $p$. Let $\Lambda_S$ be a subpartition of $S$. Then there exist local coordinates centered at $p$ with local hypersurface defining functions $x_i$, $y_i$, $z_i$ partitioned according to $\Lambda_S$ such that  $$\omega=\sum_{i=1}^k\dfrac{dx_i}{x_i}\wedge\dfrac{dy_i}{y_i}+\sum_{j=1}^m\dfrac{dz_j}{z_j}\wedge ds_j+\sum_{k=1}^ndp_k\wedge dq_k $$ and the Poisson bi-vector associated to $\omega$ has the form $$\pi=\sum_{i =1}^k x_iy_i\dfrac{\partial}{\partial y_i}\wedge\dfrac{\partial}{\partial x_i}+\sum_{j=1}^m z_j \dfrac{\partial}{\partial s_j}\wedge\dfrac{\partial}{\partial z_j}+\sum_{k=1}^n \dfrac{\partial}{\partial q_k}\wedge\dfrac{\partial}{\partial p_k}. $$ \end{corollary}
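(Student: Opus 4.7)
The strategy is to extend Proposition~\ref{prop01}, which gives a normal form in a tubular neighborhood of the maximal intersection $X_S$ containing $p$, by one further Darboux reduction on $X_S$ followed by a relative Moser argument. Apply Proposition~\ref{prop01} with $S\subseteq D$ the set of hypersurfaces through $p$. This supplies hypersurface defining functions $x_i,y_i,z_j$ partitioned according to $\Lambda_S$ and a tubular neighborhood $U\supset X_S$ on which
\[
\omega \;=\; \sum_{i=1}^{k}\frac{dx_i}{x_i}\wedge\frac{dy_i}{y_i}\;+\;\sum_{j=1}^{m}\frac{dz_j}{z_j}\wedge\alpha_j\;+\;\delta,
\]
with $\alpha_j\in\Omega^1(X_S)$ and $\delta\in\Omega^2(X_S)$ closed, and by Proposition~\ref{inducedstructure} the pair $(\alpha_j,\delta)$ is an $m$-cosymplectic structure on $X_S$.

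Shrinking to a contractible neighborhood of $p$ inside $X_S$, the Poincar\'e lemma gives $\alpha_j=ds_j$ for smooth functions $s_j$, and the non-degeneracy $(\bigwedge_j ds_j)\wedge\delta^{n-k-m}\neq 0$ forces the $ds_j$ to be independent and $\delta$ to be symplectic on any common level set of the $s_j$. A standard $m$-cosymplectic Darboux argument -- fix the $s_j$, pick symplectic coordinates $(p_k,q_k)$ on a slice transverse to them, then adjust the $s_j$ to absorb any $ds_j\wedge(\cdot)$ cross-terms in $\delta$ -- produces coordinates $(s_j,p_k,q_k)$ on $X_S$ centered at $p$ with $\alpha_j=ds_j$ and $\delta=\sum_k dp_k\wedge dq_k$. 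Extend $(s_j,p_k,q_k)$ off $X_S$ via the tubular projection and combine them with $x_i,y_i,z_j$ to obtain a full local coordinate system centered at $p$. Let $\omega_0$ denote the model form appearing in the statement of the corollary.

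Both $\omega$ and $\omega_0$ are log two-forms with respect to the same divisor and agree on $X_S$. A final relative Moser argument -- writing $\omega-\omega_0=d\mu$ for a log one-form $\mu$ vanishing on $X_S$, defining $v_t$ by $i_{v_t}\omega_t=-\mu$ with $\omega_t=(1-t)\omega_0+t\omega$, and integrating $v_t$ to an isotopy fixing $X_S$ pointwise and each $Z\in D$ setwise -- produces the desired log-symplectomorphism on a possibly smaller neighborhood of $p$. The formula for $\pi$ then follows by block inversion of $\omega$: each $\frac{dx_i}{x_i}\wedge\frac{dy_i}{y_i}$ inverts to $x_iy_i\,\frac{\partial}{\partial y_i}\wedge\frac{\partial}{\partial x_i}$, each $\frac{dz_j}{z_j}\wedge ds_j$ to $z_j\,\frac{\partial}{\partial s_j}\wedge\frac{\partial}{\partial z_j}$, and each $dp_k\wedge dq_k$ to $\frac{\partial}{\partial q_k}\wedge\frac{\partial}{\partial p_k}$. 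The main technical point is the Darboux step on $X_S$, where the coupling between the $ds_j$ directions and the symplectic directions must be handled carefully; once that is settled the final Moser step is essentially a verbatim reprise of the argument at the end of Proposition~\ref{prop01}.
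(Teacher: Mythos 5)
Your argument is correct, but it is a longer route than the one the paper takes. The paper's proof of this corollary is essentially two sentences: a standard symplectic linear algebra argument on the single fibre $\wedge^2\,{}^{b}T_p^*M$ puts the value $\omega_p$ in the model form (no integrability or Poincar\'e lemma input is needed at one point), and then the relative Moser argument from the proof of Proposition \ref{prop01}, run relative to the point $p$, extends the identification to a neighborhood. You instead pass through the semi-global normal form of Proposition \ref{prop01} near $X_S$, integrate the $\alpha_j$, put the induced $m$-cosymplectic structure $(\alpha_j,\delta)$ on $X_S$ into Darboux form, and only then run Moser relative to $X_S$. Two remarks on that detour. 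First, the paper's logical order is the reverse of yours: Remark \ref{kremark} \emph{derives} the cosymplectic Darboux coordinates from this corollary, so if you want to use a cosymplectic Darboux theorem as an ingredient you must supply an independent proof of it, and your sketch of that step is the one place your argument is genuinely thin --- ``adjust the $s_j$ to absorb the $ds_j\wedge(\cdot)$ cross-terms'' cannot be taken literally, since changing $s_j$ changes $\alpha_j=ds_j$, which you need to preserve; the absorption has to be done in the transverse $(p_k,q_k)$ directions by a foliated Darboux or Moser argument. Second, working relative to the point $p$ rather than all of $X_S$ makes the final Moser step slightly cleaner and sidesteps the question of whether $S$ gives a maximal intersection. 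The block inversion producing $\pi$ at the end is the same in both treatments and is fine.
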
 

\begin{remark}\label{kremark}

In general, $\mathcal{A}$-symplectic structures for a Lie algebroid $\mathcal{A}$ can reduce showing the existence of `Darboux' coordinates for a variety of structures into a standard symplectic linear algebra and a symplectic relative Moser argument. 

Indeed we will next show that every $k$-cosymplectic structure $(\alpha_j,\beta)$ can sit inside a larger manifold $(M,D)$ such that a log symplectic form on $M$ induces $(\alpha_j,\beta)$ on $W$. 
Thus, given a $k$-cosymplectic manifold $(W,\alpha_j,\beta)$, there exist local coordinates
$$(s_1,\dots,s_k,p_1,q_1,\dots,p_n,q_n)$$ such that $$\alpha_j=ds_j\text{ and }\beta=\sum dp_k\wedge dq_k.$$ 
\end{remark} 

\subsection{Examples from products}

In Example 18 of \cite{Guillemin01}, Guillemin, Miranda, and Pires explained how given any compact $b$-symplectic surface $(M_b,\pi_b)$ and any compact symplectic surface $(M_s,\pi_s)$, the product $(M_b\times M_s,\pi_b + \pi_s)$ is a b-Poisson manifold. Similarly, the product of any partitionable log symplectic surfaces will produce a partitionable log symplectic manifold. 

For instance, the torus $\mathbb{T}^2\simeq\mathbb{S}^1_\theta\times\mathbb{S}^1_\rho$ is a log symplectic surface with partitionable form  $$\omega=\dfrac{d\theta}{\sin(\theta)}\wedge\dfrac{d\rho}{\sin(\rho)}.$$ 

In general, we have a product structure for partitionable log symplectic manifolds: Given two partitionable log symplectic manifolds $(M_1,D_1,\omega_1)$ and $(M_2,D_2,\omega_2)$, one can show that the product $$(M_1\times M_2,(D_1\times M_2)\cup (D_2\times M_1),\omega_1+\omega_2)$$  is also a partitionable log symplectic manifold. Further, this product respects the existing partitions of $D_1$ and $D_2$: For instance, if $Z\in D_1$ was type $x$, then $Z\times M_2$ is type $x$ in $(D_1\times M_2)\cup (D_2\times M_1)$. 

We can also explicitly construct partitionable log symplectic manifolds from a given $k$-cosymplectic structure by taking a product with a torus. 
\begin{Example}
Let $(M,\beta)$ be any symplectic manifold. Then $\mathbb{T}^k\times\mathbb{T}^k\times M$ with $\theta_1,\dots,\theta_k,\rho_1,\dots,\rho_k$ angular coordinates on $\mathbb{T}^k\times\mathbb{T}^k$ is a partitionable log symplectic structure with the form $$\omega = \sum_{i=1}^k\dfrac{d\theta_i}{\sin(\theta_i)}\wedge \dfrac{d\rho_i}{\sin(\rho_i)}+\beta.$$ 

Let $(M,\alpha_j,\beta)$ be any $k$-cosymplectic manifold. Then $\mathbb{T}^k\times M$ with $\theta_i$ the angle coordinates on $\mathbb{S}^1$  is a partitionable log symplectic structure with the form $$\omega = \sum_{i=1}^k\dfrac{d\theta_i}{\sin(\theta_i)}\wedge \alpha_i+\beta.$$ 
\end{Example}

 \subsection{Symplectic foliations} Next we will show that the class of partitionable $\log$ symplectic structures induce regular foliations on the intersection of hypersurfaces in $D$. 
 
By Propositions \ref{prop01} and \ref{inducedstructure} there are exactly two types of behavior for a partitionable $\log$ symplectic form at the nonempty intersection of two hypersurfaces $Z_1,Z_2$ in divisor $D$ away from $D\setminus \left\{Z_1,Z_2\right\}$.

\begin{multicols}{2}

{\bf Type 1:} In the first instance, we have a form of the type $$\omega=\dfrac{dx}{x}\wedge\dfrac{dy}{y}+\delta$$ where $X=\left\{x=0\right\}$ and $Y=\left\{y=0\right\}$. Then $X\cap Y$ is a symplectic leaf in the foliation induced by $\omega$. This leaf extends the foliation on $X$ away from $X\cap Y$ and extends the foliation on $Y$ away from $X\cap Y$. 

\columnbreak \begin{center}
{\bf Type 1 Intersection} 

\vspace{.5ex} 
\includegraphics[scale=.8]{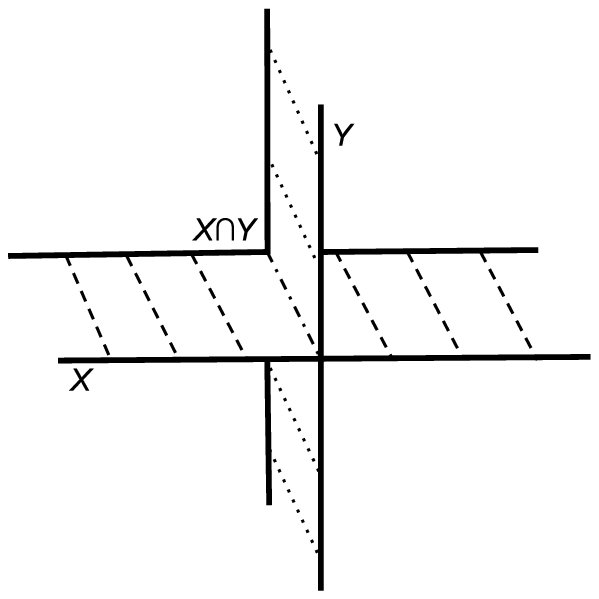} \end{center}

\end{multicols}

\begin{multicols}{2}

{\bf Type 2:} In the second instance, we have a form of the type $$\omega=\dfrac{dz}{z}\wedge\alpha +\dfrac{dv}{v}\wedge\beta+\delta$$ where $Z=\left\{z=0\right\}$ and $V=\left\{v=0\right\}$. Then $\omega$ induces a codimension 2 symplectic foliation on $Z\cap V$.  

\columnbreak \begin{center}

{\bf Type 2 Intersection} 

\vspace{.5ex} 

 \includegraphics[scale=.8]{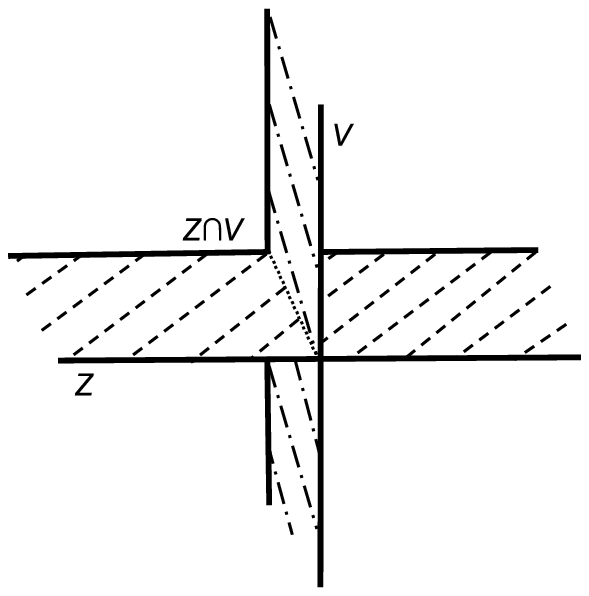}  \end{center}
\end{multicols}

For general intersections, let $I,J,K\subseteq\left\{1,\dots,k\right\}$ and $ L\subseteq\left\{1,\dots,m\right\}$ such that $I\cap J=I\cap K =\emptyset.$ On $$W=\bigcap_{i\in I}\left(Z_{x_i}\cap Z_{y_i}\right)\bigcap_{j\in J}Z_{x_j}\bigcap_{k\in K}Z_{y_k}\bigcap_{\ell\in L}Z_{z_\ell}$$ away from $W\cap (D\setminus \left\{Z_{x_i}, Z_{y_i},Z_{x_j},Z_{y_k},Z_{z_\ell}\right\})$, $\omega$ induces a regular codimension $|J|+|K|+|L|$ symplectic foliation. Further, the foliation is given by the $k$-cosymplectic structure provided in Proposition \ref{inducedstructure}. 

 \section{\bf Proof of Theorem \ref{theorem01}}
 
 Recall, given a Poisson manifold $(M,\pi)$, the Poisson cohomology $H^*_\pi(M)$ is defined as the cohomology groups of the Lichnerowicz complex (see for instance \cite{Dufour}, p. 39). The $k$-th element in the sequence is made up of smooth $k$-multivector fields on $M$, $\mathcal{V}^k(M):=\mathcal{C}^{\infty}(M;\wedge^k TM)$. $$\dots\to\mathcal{V}^{k-1}(M)\xrightarrow{d_{\pi}}\mathcal{V}^{k}(M)\xrightarrow{d_{\pi}}\mathcal{V}^{k+1}(M)\to\dots$$ The differential $$d_{\pi}:  \mathcal{V}^k(M)\to\mathcal{V}^{k+1}(M)$$ is defined as $$d_{\pi}=[\pi,\cdot],$$ where $[\cdot,\cdot]$ is the Schouten bracket extending the standard Lie bracket on vector fields $\mathcal{V}^1(M)$. 
 
 Before delving into the details of the proof of Theorem \ref{theorem01}, we will sketch the computation of Poisson cohomology for a partitionable log symplectic form on $\mathbb{T}^4$ to motivate the constructions involved in the proof.

 \subsubsection{{\bf Motivating Computation.}} Consider $\mathbb{T}^4$ identified as $\mathbb{T}^2\times\mathbb{T}^2$ with angular coordinates $(x,y)$ and $(z,t)$ respectively. Let $D$ be  $$Z_x=\left\{\sin(x)=0\right\}, Z_y=\left\{\sin(y)=0\right\},\text{ and }Z_z=\left\{\sin(z)=0\right\}.$$ We equip this manifold with the log symplectic form $$\omega=\dfrac{dx}{\sin(x)}\wedge\dfrac{dy}{\sin(y)}+\dfrac{dz}{\sin(z)}\wedge dt.$$ This symplectic form is the inverse to the Poisson bi-vector $$\pi=\sin(x)\sin(y)\dfrac{\partial}{\partial y}\wedge\dfrac{\partial}{\partial x}+\sin(z)\dfrac{\partial}{\partial t}\wedge\dfrac{\partial}{\partial z}.$$ 
 
The image $\omega^\flat(TM)$ is spanned by $$\dfrac{dx}{\sin(x)\sin(y)},~\dfrac{dy}{\sin(x)\sin(y)},~\dfrac{dz}{\sin(z)},~ \dfrac{dt}{\sin(z)}.$$
 We can realize this image as the dual of the Lie algebroid, called $\mathcal{R}$, spanned by $$\sin(x)\sin(y)\dfrac{\partial}{\partial x},~ \sin(x)\sin(y)\dfrac{\partial}{\partial y}, ~\sin(z)\dfrac{\partial}{\partial z},~ \sin(z)\dfrac{\partial}{\partial t}.$$ This is a Lie algebroid with anchor map inclusion into $TM$ and with Lie bracket induced by the standard Lie bracket. 
 
 The Lichnerowicz Poisson complex of $\pi$ is isomorphic to the Lie algebroid de Rham cohomology of $\mathcal{R}$.  We compute the de Rham cohomology of  $\mathcal{R}$ in stages, by first computing the de Rham cohomology of a subcomplex $\mathcal{A}$. 
 
 Let $\mathcal{A}$ denote the Lie algebroid spanned by $$\dfrac{\partial}{\partial x},~ \dfrac{\partial}{\partial y},~ \sin(z)\dfrac{\partial}{\partial z},~ \sin(z)\dfrac{\partial}{\partial t}.$$ This vector bundle is a Lie algebroid with anchor map the inclusion into $TM$ and with Lie bracket induced by smoothly extending the standard Lie bracket away from $Z$ to $Z$. 
 
 Because $\mathcal{A}\subseteq \mathcal{R}$ is a sub Lie algebroid, there is an inclusion of Lie algebroid de Rham complexes $${}^{\mathcal{A}}\Omega^p(\mathbb{T}^4)\to{}^{\mathcal{R}}\Omega^p(\mathbb{T}^4).$$
 
 We will first compute ${}^{\mathcal{A}}H^p(\mathbb{T}^4)$.  A degree-p $\mathcal{A}$ form has an expression $$\mu=\dfrac{dz}{\sin(z)}\wedge\dfrac{dt}{\sin(z)}\wedge \cos(z)A+\dfrac{dz}{\sin(z)}\wedge B+\dfrac{dt}{\sin(z)}\wedge C+D$$ where $A\in\Omega^{p-2}(Z_z)$, $B\in\Omega^{p-1}(Z_z)$, $C\in\Omega^{p-1}(Z_z)$, and $ D\in\Omega^p(\mathbb{T}^4)$. Then $d\mu =$ $$\dfrac{dz}{\sin(z)}\wedge\dfrac{dt}{\sin(z)}\wedge \cos(z)dA-\dfrac{dz}{\sin(z)}\wedge dB-\cos(z)\dfrac{dz}{\sin^2(z)}\wedge dt \wedge C-\dfrac{dt}{\sin(z)}\wedge dC+dD.$$  Note that $\cos(z)=\pm 1$ when $\sin(z)=0$. In particular, $\cos(z)=1$ when $z=0$ and $\cos(z)=-1$ when $z=\pi$. Thus $$\ker d=\left\{ C=dA,\hspace{2ex} dB=0, \hspace{2ex}  dD = 0\right\}.$$ 
 If $\mu$ is closed, then there is a degree-$(p-1)$ $\mathcal{A}$ form $$\widetilde{\mu}= \dfrac{dz}{\sin(z)}\wedge b+\dfrac{dt}{\sin(z)}\wedge A+\delta$$ such that \begin{equation}\label{eq:01}\mu+d\widetilde{\mu}=\dfrac{dz}{\sin(z)}\wedge (B-db)+(D+d\delta).\end{equation} 
 
 Thus $[\mu+d\widetilde{\mu}]\in {}^{\mathcal{A}}H^p(\mathbb{T}^4)$ has a representative as in equation (\ref{eq:01}). This computation also shows that if $\mu=\dfrac{dz}{\sin(z)}\wedge B+D\in d({}^{\mathcal{A}}\Omega^{p-1}(\mathbb{T}^4))$, then $\mu=d\nu$ for some $\nu\in{}^{\mathcal{A}}\Omega^{p-1}(\mathbb{T}^4)$ where $$\nu=\dfrac{dz}{\sin(z)}\wedge\dfrac{dt}{\sin(z)}\wedge\cos(z)\alpha +\dfrac{dz}{\sin(z)}\wedge \beta+\dfrac{dt}{\sin(z)}\wedge\gamma +\delta$$ and $B=d\beta$, $D=d\delta$. Thus $B$ and $D$ are exact. Further, if two forms $\nu_1, \nu_2$ are representatives of the same cohomology class in ${}^{\mathcal{A}}H^p(\mathbb{T}^4)$, this shows that the coefficients of the expression $\nu_1-\nu_2$ must be exact. 
 Thus we have shown that $${}^{\mathcal{A}}H^p(\mathbb{T}^4)=\frac{\left\{{B\in\Omega^{p-1}(Z_z):dB=0}\right\}}{\left\{{B:B=db,b\in \Omega^{p-2}(Z_z)}\right\}}\bigoplus \frac{\left\{{D\in\Omega^{p}(\mathbb{T}^4):dD=0}\right\}}{\left\{{D:D=d\delta,\delta\in \Omega^{p-1}(\mathbb{T}^4)}\right\}}$$
 
 and $${}^\mathcal{A}H^p(\mathbb{T}^4)\simeq H^p(\mathbb{T}^4)\oplus H^{p-1}(Z_z).$$
 
 \vspace{2ex} 
 
 Now we can compute ${}^{\mathcal{R}}H^p(\mathbb{T}^4)$ using ${}^{\mathcal{A}}H^p(\mathbb{T}^4)$. A degree-p $\mathcal{R}$ form $\mu$ has an expression $$\dfrac{dx\wedge dy }{\sin^2(x)\sin^2(y)}\wedge (A_{00}+A_{10}\cos(y)\sin(x)+A_{01}\cos(x)\sin(y)+A_{20}\cos(y)\sin^2(x))$$ $$+\dfrac{dx\wedge dy}{\sin^2(x)\sin^2(y)}\wedge (A_{02}\cos(x)\sin^2(y)+A_{11}\sin(x)\sin(y))$$  $$+\dfrac{dx}{\sin(x)\sin(y)}\wedge (B_0+B_1\sin(x))+\dfrac{dy}{\sin(x)\sin(y)}\wedge (C_0+C_1\sin(y))$$ $$+\dfrac{dx}{\sin(x)}\wedge D+\dfrac{dy}{\sin(y)}\wedge E + F$$ where $A_{i,j}\in{}^{\mathcal{A}}\Omega^{p-2}(Z_x\cap Z_y)$, $C_i,D\in{}^{\mathcal{A}}\Omega^{p-1}(Z_x)$, $B_i,E\in{}^{\mathcal{A}}\Omega^{p-1}(Z_y)$, and $ F\in{}^{\mathcal{A}}\Omega^p(\mathbb{T}^4)$. Further, $B_0$ is independent of $x$ and $C_0$ is independent of $y$. 
 
 Then $$d\mu =\dfrac{dx\wedge dy}{\sin^2(x)\sin^2(y)}\wedge (dA_{00}+dA_{10}\cos(y)\sin(x)+dA_{01}\cos(x)\sin(y))$$ 
 
 $$+\dfrac{dx\wedge dy}{\sin^2(x)\sin^2(y)}\wedge (dA_{20}\cos(y)\sin^2(x)+ dA_{02}\cos(x)\sin^2(y)+dA_{11}\sin(x)\sin(y))$$  $$+\cos(y)\dfrac{dx}{\sin(x)}\wedge\dfrac{dy}{\sin^2(y)}\wedge (B_0+B_1\sin(x)) -\dfrac{dx}{\sin(x)\sin(y)}\wedge (dB_0+dB_1\sin(x))$$ $$-\cos(x)\dfrac{dx}{\sin^2(x)}\wedge\dfrac{dy}{\sin(y)}\wedge (C_0+C_1\sin(y)) -\dfrac{dy}{\sin(x)\sin(y)}\wedge (dC_0+dC_1\sin(y))$$ $$-\dfrac{dx}{\sin(x)}\wedge dD-\dfrac{dy}{\sin(y)}\wedge dE + dF.$$  Note that $\cos(x)=\pm 1$ when $\sin(x)=0$ and $\cos(y)=\pm 1$ when $\sin(y)=0$. Thus $\ker d$ is determined by the relations 
 $$dA_{00}=0, \hspace{2ex} B_0=-dA_{10}, \hspace{2ex} B_1=-dA_{20}, \hspace{2ex}
 C_0=dA_{01},$$
 $$C_1=dA_{02}, \hspace{2ex} dA_{11}=0, \hspace{2ex} dD = 0, \hspace{2ex} dE=0, \hspace{2ex} dF=0$$ even though $B_1$ could depend on $x$ and $C_1$ could depend on $y$ above.  
 
\noindent If $d\mu=0$, then there is a degree-$(p-1)$ $\mathcal{R}$ form $\widetilde{\mu}$ of the form $$\dfrac{dx\wedge dy}{\sin^2(x)\sin^2(y)}\wedge (a_{00}+a_{11}\sin(x)\sin(y))+\dfrac{dx}{\sin(x)\sin(y)}\wedge (A_{10}+A_{20}\sin(x))$$ $$+\dfrac{dy}{\sin(x)\sin(y)}\wedge (A_{01}+A_{02}\sin(x))+\dfrac{dx}{\sin(x)}\wedge \delta+\dfrac{dy}{\sin(y)}\wedge e + f$$ such that $[\mu-d\widetilde{\mu}]\in {}^{\mathcal{R}}H^p(\mathbb{T}^4)$ has a representative  $$\dfrac{dx\wedge dy}{\sin^2(x)\sin^2(y)}\wedge ((A_{00}-da_{00})+(A_{11}-da_{11})\sin(x)\sin(y))$$ $$+\dfrac{dx}{\sin(x)}\wedge (D+d\delta)+\dfrac{dy}{\sin(y)}\wedge (E+de) + (F-df).$$ 
 
This computation also shows that if $\mu=$  $$\dfrac{dx\wedge dy}{\sin^2(x)\sin^2(y)}\wedge (A_{00}+A_{11}\sin(x)\sin(y))+\dfrac{dx}{\sin(x)}\wedge D+\dfrac{dy}{\sin(y)}\wedge E + F$$ is in $ d({}^{\mathcal{R}}\Omega^{p-1}(M)),$ then $\mu=d\nu$ for some $\nu\in{}^{\mathcal{A}}\Omega^{p-1}(M)$ where $\nu=$ $$\dfrac{dx\wedge dy}{\sin^2(x)\sin^2(y)}\wedge (a_{00}+a_{10}\cos(y)\sin(x)+a_{01}\cos(x)\sin(y)+a_{20}\cos(y)\sin^2(x))$$ 
$$+\dfrac{dx\wedge dy}{\sin^2(x)\sin^2(y)}\wedge (a_{02}\cos(x)\sin^2(y)+a_{11}\sin(x)\sin(y))$$ $$+\dfrac{dx}{\sin(x)\sin(y)}\wedge (b_0+b_1\sin(x))+\dfrac{dy}{\sin(x)\sin(y)}\wedge (c_0+c_1\sin(y))$$ $$+\dfrac{dx}{\sin(x)}\wedge \delta+\dfrac{dy}{\sin(y)}\wedge e + f$$ and $A_{00}=da_{00}$, $A_{11}=da_{11}$, $D=d\delta$, $E=de$, and $F=df$. Thus if two forms $\nu_1, \nu_2$ are representatives of the same cohomology class in ${}^{\mathcal{R}}H^p(\mathbb{T}^4)$, then the coefficients of the expression $\nu_1-\nu_2$ must be exact.  Thus, we have shown ${}^\mathcal{R}H^p(\mathbb{T}^4)$ is $$\frac{\left\{{A_{00}\in{}^{\mathcal{A}}\Omega^{p-2}(Z_x\cap Z_y):dA_{00}=0}\right\}}{\left\{{A_{00}=da_{00},a_{00}\in {}^{\mathcal{A}}\Omega^{p-3}(Z_x\cap Z_y) }\right\}}\bigoplus\frac{\left\{{A_{11}\in{}^{\mathcal{A}}\Omega^{p-2}(Z_x\cap Z_y):dA_{11}=0}\right\}}{\left\{{A_{11}=da_{11}, a_{11}\in {}^{\mathcal{A}}\Omega^{p-3}(Z_x\cap Z_y) }\right\}}$$  $$\bigoplus\frac{\left\{{D\in{}^{\mathcal{A}}\Omega^{p-1}(Z_x):dD=0}\right\}}{\left\{{D=d\delta,\delta\in {}^{\mathcal{A}}\Omega^{p-2}(Z_x)}\right\}}\bigoplus \frac{\left\{{E\in{}^{\mathcal{A}}\Omega^{p-1}(Z_y):dE=0}\right\}}{\left\{{E=de,e\in {}^{\mathcal{A}}\Omega^{p-2}(Z_y)}\right\}}\bigoplus $$ $$\frac{\left\{{F\in{}^{\mathcal{A}}\Omega^{p}(\mathbb{T}^4):dF=0}\right\}}{\left\{{F=df,f\in {}^{\mathcal{A}}\Omega^{p-1}(\mathbb{T}^4)}\right\}}.$$
 
 Thus in fixed coordinates $x,y,z,t$, ${}^{\mathcal{R}}H^p(\mathbb{T}^4)$ is computable as 
 
 \centerline{$ {}^{\mathcal{A}}H^p(\mathbb{T}^4)\oplus {}^{\mathcal{A}}H^{p-1}(Z_y)\oplus {}^{\mathcal{A}}H^{p-1}(Z_x)\oplus {}^{\mathcal{A}}H^{p-2}(Z_x\cap Z_y)\oplus {}^{\mathcal{A}}H^{p-2}(Z_x\cap Z_y)\simeq$} $$\underbrace{H^p(\mathbb{T}^4)\oplus H^{p-1}(Z_z)}_{{}^{\mathcal{A}}H^p(\mathbb{T}^4)}\oplus  \underbrace{H^{p-1}(Z_y)\oplus H^{p-2}(Z_y\cap Z_z)}_{{}^{\mathcal{A}}H^{p-1}(Z_y)} \oplus  \underbrace{H^{p-1}(Z_x) \oplus  H^{p-2}(Z_x\cap Z_z)}_{{}^{\mathcal{A}}H^{p-1}(Z_x)}$$ 
 $$\oplus \underbrace{H^{p-2}(Z_x\cap Z_y) \oplus H^{p-3}(Z_x\cap Z_y\cap Z_z)}_{{}^{\mathcal{A}}H^{p-2}(Z_x\cap Z_y)}\oplus \underbrace{H^{p-2}(Z_x\cap Z_y)\oplus H^{p-3}(Z_x\cap Z_y\cap Z_z)}_{{}^{\mathcal{A}}H^{p-2}(Z_x\cap Z_y)}.$$
 
 \vspace{2ex}
 
 \noindent We will now discuss the  details necessary to complete the computation for general partitionable log symplectic structures. 
 
 \subsection{Good tubular neighborhoods}
Consider a manifold $M$ with a set $D$ of smooth transversely intersecting hypersurfaces.  For each point $p\in D$, there is a chart $(U, f)$ of $M$ centered at $p$ such that $f(D)$ is a union of a subset of the coordinate hyperplanes in $\mathbb{R}^n$ intersected with $f(D)$. \begin{center} \includegraphics[scale=.4]{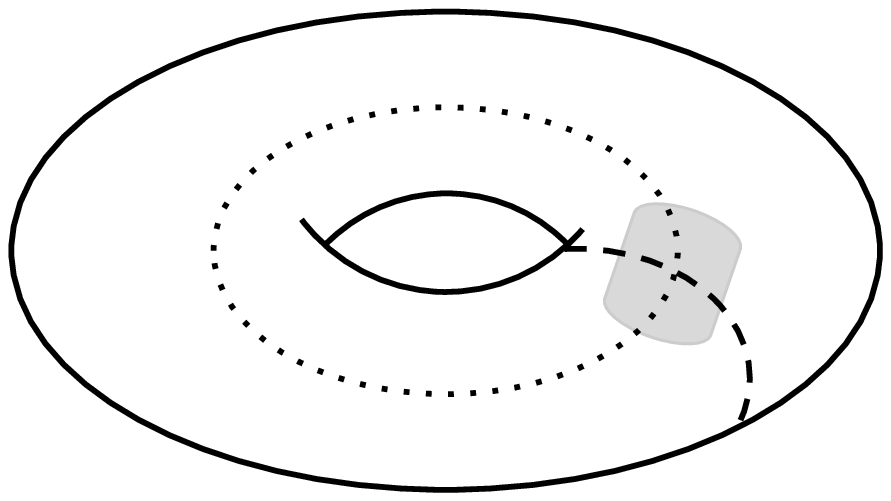}\includegraphics[scale=.7]{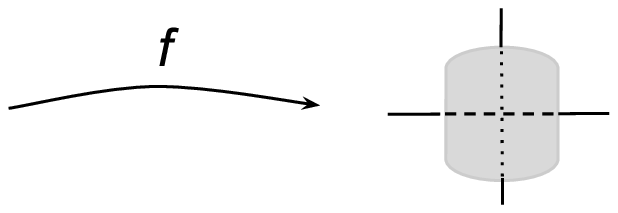}\end{center} 
A good tubular neighborhood $\tau=Z\times(-\varepsilon,\varepsilon)$ of $Z\in D$ is a neighborhood that extends charts of the type $U$ above at the intersection of $Z\cap (D\setminus Z)$. In our computations below we will always use good tubular neighborhoods. For existence of such neighborhoods, see for instance section 5 of \cite{Albin}. 
 
 \subsection{Constructing the rigged algebroid} 
 
To compute the Poisson cohomology of a partitionable $\log$ symplectic manifold we will  use rigged algebroids, see \cite{Lanius} for more details. 

\begin{definition} 
Given a partitionable $\log$ symplectic manifold $(M,D,\omega)$, the dual rigged bundle ${}^{log}\mathcal{R}^*$ is the extension to $M$ of the image $\omega^\flat(TM)$ away from $D$. The log rigged forms are $${}^{log \mathcal{R}}\Omega^p(M)=\mathcal{C}^\infty(M;\wedge^p({}^{log}\mathcal{R}^*)).$$ \end{definition} 

The rigged Lie algebroid is isomorphic to the Poisson Lie algebroid $T^*M$ with anchor map $\pi^\sharp=(\omega^\flat)^{-1}$. Because the $\log$ rigged anchor map $\rho$ is given by inclusion into $TM$, this new setting translates the computation of Poisson cohomology into the familiar language of de Rham cohomology of $T^*M$.  

\begin{center}$\xygraph{!{<0cm,0cm>;<1cm, 0cm>:<0cm,1cm>::}
!{(.25,1)}*+{T^*M}="b"
!{(1.75,1)}*+{{{}^{log}\mathcal{R}}}="c"
!{(1,1)}*+{{\simeq}}="f"
!{(1,0)}*+{TM}="d"
"b":"d"_{\pi^{\sharp}}
"c":"d"^{\rho}}$\end{center} 

Next, we will identify the Lie algebroid ${}^{log}\mathcal{R}$. For the purposes of computing Poisson cohomology, it will be useful to construct ${}^{log}\mathcal{R}$ through a sequence of rescalings. 

\subsubsection{{\bf The Lie algebroid $\mathcal{A}_i$.}} Consider an expression of $\omega$ as in equation (\ref{eq:normal}):  
$$\omega=  \sum_{i=1}^k\dfrac{dx_i}{x_i}\wedge\dfrac{dy_i}{y_i}+\sum_{j=1}^{m}\dfrac{dz_j}{z_j}\wedge\alpha_j+\delta.$$ We will begin by rescaling $TM$ at $Z_{z_1}$ by the vector bundle $\ker\alpha_1\to Z_{z_1}$. In order to employ Theorem 2.2 from \cite{Lanius}, we must verify that $$[\ker\alpha_1,\ker\alpha_1]\subseteq \ker\alpha_1.$$ Let $X,Y\in\ker\alpha_1$. Consider \begin{equation}\label{eq:kercomp} d\alpha_1(X,Y)=X\alpha_1(Y)-Y\alpha_1(X)-\alpha_1([X,Y]).\end{equation} Because $d\alpha_1=0$ and $X,Y\in\ker\alpha_1$, this reduces to $\alpha_1([X,Y])=0$. Thus $[X,Y]\in\ker\alpha_1$. 

Thus by Theorem 2.2 in \cite{Lanius}, there is a Lie algebroid $\mathcal{A}_1$ whose space of sections is $$\left\{u\in\mathcal{C}^\infty(M;TM):u|_{Z_{z_1}}\in\mathcal{C}^\infty(Z_{z_1};\ker\alpha_{1}) \right\}.$$ 

Note that the cotangent bundle $T^*M$ includes into  $\mathcal{A}^*_1$. Thus the one form $\alpha_2\in\Omega^1(M)|_{Z_{z_2}}$ can be lifted to a one form $\widetilde{\alpha}_2=i(\alpha_2)\in{}^{\mathcal{A}_1}\Omega^1(M)|_{Z_{z_2}}$. 

\begin{center}$\xymatrix{
\mathcal{A}^*_1|_{Z_{z_2}}   & \\
T^*M|_{Z_{z_2}} \ar[u]^{i}  & M  \ar@{.>}[lu]_{\widetilde{\alpha}_2} \ar[l]_{\alpha_2}}$\end{center}

Note that we can always lift forms in this way, however the lifted form may vanish at $Z_{z_1}\cap Z_{z_2}$ while the original does not. In order to employ Theorem 2.2 from \cite{Lanius}, we must verify that $\ker\widetilde{\alpha}_2$ is a subbundle of $\mathcal{A}_1|_{Z_{z_2}}$.
 Because  $\alpha_2$ is a closed one-form in a $k$-cosymplectic structure at $Z_{z_2}$, $\ker\alpha_2$ is a subbundle of $TM|_{Z_{z_2}}$. Away from $Z_{z_1}\cap Z_{z_2}$, the inclusion $i$ gives us an isomorphism $\mathcal{A}^*_1|_{Z_{z_2}}\simeq T^*M|_{Z_{z_2}}$ and it is clear that $\ker\widetilde{\alpha}_2$ is a subbundle of $\mathcal{A}^*_1|_{Z_{z_2}}$ . 

Let $p\in Z_{z_1}\cap Z_{z_2}$.  Then, as described in Remark \ref{kremark}, there exist local coordinates $z_1,s_1,z_2,s_2,p_1,\dots,p_n$ of $M$ at $Z_{z_1}\cap Z_{z_2}$ such that $\alpha_1=ds_1$ and $\alpha_2=ds_2$. 
Note that $$T^*_pM\text{ is spanned by }dz_1,ds_1,dz_2,ds_2,dp_1,\dots, dp_n$$
 and $$\mathcal{A}_1^*|_p\text{ is spanned by }\dfrac{dz_1}{z_1},\dfrac{ds_1}{z_1},dz_2,ds_2,dp_1,\dots, dp_n.$$ 
 
Note that the support of $\alpha_2$ is the image of the support of $\widetilde{\alpha}_2$ under the anchor map of $\mathcal{A}_1$.  Thus rank(ker$\alpha_2$)=rank(ker$\widetilde{\alpha}_2$) and $\ker\widetilde{\alpha}_2$ is a subbundle of $\mathcal{A}_1|_{Z_{z_2}}$. 
 
 We will form $\mathcal{A}_2$ by rescaling $\mathcal{A}_1$ by $\ker\widetilde{\alpha}_2$ at $Z_2$. By the computation analogous to equation (\ref{eq:kercomp}) with respect to $\widetilde{\alpha}_2$, there exists a Lie algebroid $\mathcal{A}_2$ whose space of sections is $$\left\{u\in\mathcal{C}^\infty(M;\mathcal{A}_1):u|_{Z_{z_2}}\in\mathcal{C}^\infty(Z_{z_2};\ker\widetilde{\alpha}_2) \right\}.$$

To form $\mathcal{A}_j$ we will rescale $\mathcal{A}_{j-1}$ at $Z_{z_{j}}$. As above, we lift the one form $\alpha_j\in\Omega^1(Z_{z_j})$ to the one form $\widetilde{\alpha}_j=i(\alpha_j)\in{}^{\mathcal{A}_{j-1}}\Omega^1(M)|_{Z_{z_j}}$. Analogous to the argument above, one can verify in local coordinates that $\ker\widetilde{\alpha}_j$ is a subbundle of $\mathcal{A}_{j-1}|_{Z_{z_j}}$. By computation (\ref{eq:kercomp}), there exists a Lie algebroid $\mathcal{A}_j$ whose space of sections is $$\left\{u\in\mathcal{C}^\infty(M;\mathcal{A}_{j-1}):u|_{Z_{z_j}}\in\mathcal{C}^\infty(Z_{z_j};\ker\widetilde{\alpha}_{j}) \right\}.$$

\subsubsection{{\bf The Lie algebroid $\mathcal{B}_i$.}}

Next, we rescale $\mathcal{A}_m$ at $Z_{x_i}$ and $Z_{y_i}$. 

As previously described, we can lift the one form $dx_1 \in\Omega^1(M)|_{Z_{x_1}}$ to the one form $\widetilde{dx}_1=i(dx_1)\in{}^{\mathcal{A}_{m}}\Omega^1(M)|_{Z_{x_1}}$ and we can lift $dy_1\in\Omega^1(M)|_{Z_{y_1}}$ to the one form $\widetilde{dy}_1=i(dy_1)\in{}^{\mathcal{A}_{m}}\Omega^1(M)|_{Z_{y_1}}$. The $\mathcal{A}_m$-one form $\widetilde{dx}_1$ is non-zero at $Z_{x_1}$ and the  $\mathcal{A}_m$-one form $\widetilde{dy}_1$ is non-zero at $Z_{y_1}$. Further, by (\ref{eq:kercomp}) above, $[\ker \widetilde{dx}_1,\ker \widetilde{dx}_1]\subseteq \ker \widetilde{dx}_1$ and  $[\ker \widetilde{dy}_1,\ker \widetilde{dy}_1]\subseteq \ker \widetilde{dy}_1$. Additionally, since we are working in a good tubular neighborhood, $$[\partial_{x_1},\partial_{y_1}]=0.$$ Thus $$[\ker \widetilde{dx}_1\cap\ker \widetilde{dy}_1,\ker \widetilde{dx}_1\cap\ker \widetilde{dy}_1]\subseteq \ker \widetilde{dx}_1\cap\ker \widetilde{dy}_1$$ and by Theorem 2.2 in \cite{Lanius}, there exists a Lie algebroid $\mathcal{B}_1$ whose space of sections is $$\left\{u\in\mathcal{C}^\infty(M;\mathcal{A}_m)\bigg|\begin{array}{c} u|_{Z_{x_1}}\in\mathcal{C}^\infty({Z_{x_1}}, \ker\widetilde{dx}_1)\\  u|_{Z_{y_1}}\in\mathcal{C}^\infty({Z_{y_1}},\ker\widetilde{dy}_1)\end{array} \right\}.$$  

We iteratively form $\mathcal{B}_j$ by rescaling $\mathcal{B}_{j-1}$. First, we lift the one form $dx_j \in\Omega^1(M)|_{Z_{x_j}}$ to the one form $\widetilde{dx}_j=i(dx_j)\in{}^{\mathcal{B}_{j-1}}\Omega^1(M)|_{Z_{x_j}}$ and we lift $dy_j\in\Omega^1(M)|_{Z_{y_j}}$ to the one form $\widetilde{dy}_j=i(dy_j)\in{}^{\mathcal{B}_{j-1}}\Omega^1(M)|_{Z_{y_j}}$. Similar to above, the algebroid $\mathcal{B}_n$ is the vector bundle whose space of sections is $$\left\{u\in\mathcal{C}^\infty(M;\mathcal{B}_{j-1})\bigg|\begin{array}{c} u|_{Z_{x_j}}\in\mathcal{C}^\infty({Z_{x_j}}, \ker\widetilde{dx}_j)\\  u|_{Z_{y_j}}\in\mathcal{C}^\infty({Z_{y_j}},\ker\widetilde{dy}_j)\end{array} \right\}.$$ 

By using our local expression for a partitionable log symplectic form $\omega$, one can check that $\mathcal{B}_k={}^{log}\mathcal{R}$ as a vector bundle and, by the continuity of the standard Lie bracket off of $D$, these are in fact isomorphic as Lie algebroids. If $\displaystyle{W=\bigcap_{Z\in D}Z}$ is non-empty, then for all $p\in W$ there exist local coordinates

\centerline{$(x_1,y_1,\dots,x_k,y_k,\hspace{.5ex}z_1,v_1,\dots,z_\ell,v_\ell,\hspace{.5ex}p_1,q_1\dots,p_m,q_m)$} \noindent such that the sections of ${}^{\log}\mathcal{R}$ are smooth linear combinations of 
$$x_1y_1\dfrac{\partial}{\partial x_1},x_1y_1\dfrac{\partial}{\partial y_1},\dots,x_ky_k\dfrac{\partial}{\partial x_k},x_ky_k\dfrac{\partial}{\partial y_k},$$ 

$$z_1\dfrac{\partial}{\partial z_1},z_1\dfrac{\partial}{ \partial v_1},\dots,z_\ell\dfrac{\partial}{\partial z_\ell},z_\ell\dfrac{\partial}{ \partial v_\ell},\text{ and }\dfrac{\partial}{\partial p_1},\dfrac{\partial}{\partial q_1},\dots,\dfrac{\partial}{\partial p_m},\dfrac{\partial}{\partial q_m}.$$ 
We can locally identify ${}^{log}\mathcal{R}^*$ as the span of $$\dfrac{dx_i}{x_iy_i},\dfrac{dy_i}{x_iy_i}, \dfrac{z_j}{z_j},\dfrac{\alpha_j}{z_j},dp_n,dq_n.$$

 The \emph{$\log$ rigged de-Rham forms} are $${}^{\log\mathcal{R}}\Omega^p(M)= \mathcal{C}^\infty(M; \wedge^p ({}^{\log} \mathcal{R}^*)),$$ smooth sections of the $p$-th exterior power of ${}^{\log}\mathcal{R}^*$. This complex has exterior derivative $d$ given by extending the standard smooth differential on $M\setminus D$ to $M$. 
 
 \subsection{Computing the de Rham cohomology of ${}^{\log} \mathcal{R}$ }

\begin{lemma}The Poisson cohomology of a partitionable $\log$ Poisson manifold $(M,D,\pi)$ is isomorphic to the de Rham cohomology $^{log\mathcal{R}}H^*(M)$ of the $\log$ rigged algebroid ${}^{log}\mathcal{R}$. 
\end{lemma}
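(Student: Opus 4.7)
The plan is to reduce this lemma to the standard identification of Poisson cohomology with the Lie algebroid de Rham cohomology of the cotangent algebroid, combined with an isomorphism of Lie algebroids between that cotangent algebroid and ${}^{log}\mathcal{R}$. Recall that for any Poisson manifold $(M,\pi)$, the bundle $T^*M$ carries a canonical Lie algebroid structure with anchor $\pi^\sharp$ and bracket characterized by $[df,dg]_\pi = d\{f,g\}_\pi$, and contraction with $\pi$ gives a chain isomorphism from the Lichnerowicz complex $(\mathcal{V}^\bullet(M), d_\pi)$ onto the Lie algebroid de Rham complex $(\Omega^\bullet(T^*M), d_{T^*M})$. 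Hence it suffices to produce a Lie algebroid isomorphism $\Phi:(T^*M,\pi^\sharp)\xrightarrow{\cong} {}^{log}\mathcal{R}$.

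On the dense open set $M\setminus D$ the bi-vector $\pi$ is non-degenerate and $\pi^\sharp = (\omega^\flat)^{-1}$ is already a bundle isomorphism $T^*M|_{M\setminus D}\to TM|_{M\setminus D}$; I would define $\Phi$ as the unique smooth extension of this map, landing in ${}^{log}\mathcal{R}$. Such an extension exists dually by the very definition of ${}^{log}\mathcal{R}^*$ as the smooth extension of $\omega^\flat(TM)$ across $D$. The substantive step, and the main obstacle, is verifying that $\Phi$ remains a bundle isomorphism at points of $D$. For this I would pass to the Darboux-type local coordinates furnished by the Corollary following Proposition \ref{prop01}, in which
\begin{equation*}
\pi=\sum_{i=1}^k x_iy_i\frac{\partial}{\partial y_i}\wedge\frac{\partial}{\partial x_i}+\sum_{j=1}^m z_j\frac{\partial}{\partial s_j}\wedge\frac{\partial}{\partial z_j}+\sum_{k=1}^n\frac{\partial}{\partial q_k}\wedge\frac{\partial}{\partial p_k},
\end{equation*}
and directly compute $\pi^\sharp(dx_i) = -x_iy_i\,\partial_{y_i}$, $\pi^\sharp(dy_i) = x_iy_i\,\partial_{x_i}$, $\pi^\sharp(dz_j) = -z_j\,\partial_{s_j}$, $\pi^\sharp(ds_j) = z_j\,\partial_{z_j}$, $\pi^\sharp(dp_k) = -\partial_{q_k}$, $\pi^\sharp(dq_k) = \partial_{p_k}$. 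These are precisely the local frame for ${}^{log}\mathcal{R}$ recorded at the end of Section 3.2, so $\Phi$ carries a local frame of $T^*M$ to a local frame of ${}^{log}\mathcal{R}$ and is therefore a bundle isomorphism everywhere.

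Once $\Phi$ is known to be a bundle isomorphism, promoting it to a Lie algebroid isomorphism is automatic: $\Phi$ agrees with the cotangent algebroid identification on the open dense set $M\setminus D$, and both the anchor $\rho$ and the Lie bracket on ${}^{log}\mathcal{R}$ are defined by smooth extension of the standard ones from $M\setminus D$, so the compatibility of $\Phi$ with anchors and brackets extends by continuity to all of $M$. Combining this with the cotangent-algebroid identification recalled above yields the chain of isomorphisms $H^\bullet_\pi(M)\cong H^\bullet(T^*M,\pi^\sharp)\cong {}^{log\mathcal{R}}H^\bullet(M)$, which is the content of the lemma.
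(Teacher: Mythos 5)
Your argument is correct and follows the same route the paper intends: it identifies the Lichnerowicz complex with the de Rham complex of the cotangent Lie algebroid $(T^*M,\pi^\sharp)$ and then exhibits $\pi^\sharp$ as factoring through the anchor of ${}^{log}\mathcal{R}$, carrying a coordinate coframe to the local frame of ${}^{log}\mathcal{R}$ in the Darboux coordinates of the Corollary, exactly the isomorphism $T^*M\simeq{}^{log}\mathcal{R}$ asserted in the diagram preceding the lemma. The paper itself merely defers the details to Section 5 of \cite{Lanius}, so your write-up supplies essentially that same argument in full.
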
 

The details of the proof of this lemma can be found in Section 5 of \cite{Lanius}.  Note that we have inclusions of de Rham complexes: $${}^{\mathcal{A}_1}\Omega^*(M)\to \dots\to{}^{\mathcal{A}_m}\Omega^*(M)\to{}^{\mathcal{B}_1}\Omega^*(M)\to\dots\to{}^{\mathcal{B}_k}\Omega^*(M)={}^{log\mathcal{R}}\Omega^*(M).$$

\begin{lemma}\label{Alemma} Let $\widetilde{D}$ be the subset of $D$ consisting of hypersurfaces labeled $Z_{z_i}$. The Lie algebroid cohomology of $\mathcal{A}_m$ is isomorphic to the de Rham cohomology of the log $\widetilde{D}$ tangent bundle. That is,  $${}^{\mathcal{A}_m}H^p(M)\simeq  H^p(M)\bigoplus_{\tau\in\mathscr{T}}H^{p-|\tau|}\big(\bigcap_{j\in \tau}Z_{z_j}\big)$$ where $\mathscr{T}$ denotes all  of the nonempty subsets of $\left\{1,\dots,|\widetilde{D}|\right\}$.
\end{lemma}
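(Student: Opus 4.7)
The plan is to proceed by induction on $m = |\widetilde{D}|$. The base case $m = 0$ is immediate: there are no $z$-type hypersurfaces, so the rescaling construction is vacuous, $\mathcal{A}_0 = TM$, and ${}^{\mathcal{A}_0}H^p(M) = H^p(M)$, which matches the formula (with an empty direct sum on the right). For the inductive step, write $\mathcal{A}' = \mathcal{A}_{m-1}$ for the algebroid built from $Z_{z_1},\dots,Z_{z_{m-1}}$, and aim to establish the splitting
$${}^{\mathcal{A}_m}H^p(M)\;\simeq\;{}^{\mathcal{A}'}H^p(M)\;\oplus\;{}^{\mathcal{A}'}H^{p-1}(Z_{z_m}).$$
Once this is in hand, the inductive hypothesis applied to $M$ and to $Z_{z_m}$ (on which the restricted algebroid is the analogue built from the $Z_{z_j}\cap Z_{z_m}$ for $j<m$) recombines, organising subsets $\tau\subseteq\{1,\dots,m\}$ by whether $m\in\tau$, to give the stated formula. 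This parallels the recursive structure underlying the isomorphism (\ref{eq:logcohom}) for the log tangent bundle.

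To prove the splitting, I would work in a good tubular neighborhood $U \simeq Z_{z_m}\times(-\varepsilon,\varepsilon)$ of $Z_{z_m}$. By Proposition \ref{prop01} and Remark \ref{kremark}, in Darboux-type coordinates one has $\alpha_m = ds_m$, and every degree-$p$ section of $\wedge^p\mathcal{A}_m^*$ admits the decomposition
$$\mu \;=\; \frac{dz_m}{z_m}\wedge \frac{ds_m}{z_m}\wedge A \;+\; \frac{dz_m}{z_m}\wedge B \;+\; \frac{ds_m}{z_m}\wedge C \;+\; D,$$
with $A,B,C,D$ drawn from the $\mathcal{A}'$-complex. Computing $d\mu$ directly as in the motivating calculation on $\mathbb{T}^4$ and using that $\alpha_m$ is closed, the closedness of $\mu$ forces the relations $C = dA$, $dB = 0$, $dD = 0$, while a suitably chosen primitive $\widetilde\mu$ of the same normal form absorbs the $A$ and $C$ pieces. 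Hence each cohomology class has a representative of the form $\frac{dz_m}{z_m}\wedge B + D$ with $[B]\in {}^{\mathcal{A}'}H^{p-1}(Z_{z_m})$ and $[D]\in {}^{\mathcal{A}'}H^p(M)$ uniquely determined, yielding the splitting.

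The main obstacle is that $\alpha_m$ represents a possibly nontrivial class $b_{z_m}\in H^1(Z_{z_m})$ and is only locally exact, so the normalisation $\alpha_m = ds_m$ used above is not global. This is handled in the same spirit as the Mazzeo--Melrose treatment of the $b$-tangent bundle: because $\alpha_m$ is closed, the residue-type map $\mu\mapsto(B,D)$ that extracts $[B]$ and $[D]$ from the singular part of $\mu$ along $dz_m/z_m$ is well defined globally as a map of complexes, independent of the local choice of primitive, and the global isomorphism is assembled from its local counterparts via a Mayer--Vietoris argument on a cover by good tubular neighborhoods of the components of $\widetilde{D}$. Iterating the splitting $m$ times then completes the induction.
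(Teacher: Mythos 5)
Your proposal follows essentially the same route as the paper: an iterated splitting, one hypersurface $Z_{z_i}$ at a time, of the $\mathcal{A}_i$-de Rham complex into the $\mathcal{A}_{i-1}$-complex plus a quotient complex, with the same normal form for a form $\mu$, the same computation of $d\mu$ yielding the relations $C=dA$, $dB=0$, and the same reduction of each class to a representative $\frac{dz_i}{z_i}\wedge B+D$, giving ${}^{\mathcal{A}_i}H^p(M)\simeq{}^{\mathcal{A}_{i-1}}H^p(M)\oplus{}^{\mathcal{A}_{i-1}}H^{p-1}(Z_{z_i})$. The one place you diverge is the globalization: you normalize $\alpha_m=ds_m$ in local Darboux coordinates and then propose to patch with a Mayer--Vietoris argument, flagging the non-exactness of $\alpha_m$ as the main obstacle. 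The paper avoids this entirely by writing the singular part of $\mu$ in terms of $\frac{\alpha_i}{z_i}$ and $\frac{dz_i\wedge\alpha_i}{z_i^2}$, where $\alpha_i$ is the globally defined closed one-form on $Z_{z_i}$ appearing in the normal form of Proposition \ref{prop01}; since $\frac{\alpha_i}{z_i}$ is a global section of $\mathcal{A}_i^*$ over a good tubular neighborhood, the splitting ${}^{\mathcal{A}_i}\Omega^*(M)={}^{\mathcal{A}_{i-1}}\Omega^*(M)\oplus\mathscr{C}^*$ is a global splitting of complexes and no patching is needed. Your Mayer--Vietoris detour is not wrong, but it is unnecessary work; the cleaner fix is to keep $\alpha_m$ itself as the generator. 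One small item you should still address either way: the independence of the resulting classes $[B]$ and $[D]$ from the choice of defining function $z_m$ (not just from the choice of primitive $s_m$), which the paper checks via the change-of-defining-function computation.
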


\begin{proof} 
The bundle map $i: \mathcal{A}_{i}\to \mathcal{A}_{i-1}$ constructed in Theorem 2.2 of \cite{Lanius} is an inclusion of Lie algebroids and hence fits into a short exact sequence of complexes \begin{center} $0\to {}^{\mathcal{A}_{i-1}}\Omega^p(M)\to{}^{\mathcal{A}_i}\Omega^p(M)\to\mathscr{C}^p\to 0$ \end{center}where  $$\mathscr{C}^p={}^{\mathcal{A}_{i}}\Omega^p(M)/{}^{\mathcal{A}_{i-1}}\Omega^p(M).$$ The differential on $^{\mathscr{C}}d$ is induced by the differential $^{\mathcal{A}_i}d$ on ${}^{\mathcal{A}_i}\Omega^{p}(M)$: if $P$ is the projection ${}^{\mathcal{A}_{i}}\Omega^p(M)\to{}^{\mathcal{A}_{i}}\Omega^p(M)/{}^{\mathcal{A}_{i-1}}\Omega^p(M),$ then  $^{\mathscr{C}}d(\eta)=P({}^{\mathcal{A}_i}d(\theta))$ where  $\theta\in{}^{\mathcal{A}_i}\Omega^{p}(M)$ is any form such that $P(\theta)=\eta$. Hence $({}^\mathscr{C}d)^2=0$ and $(\mathscr{C}^*,{}^\mathscr{C}d)$ is in fact a complex. 

Given a good tubular neighborhood $\tau=Z_{z_i}\times(-\varepsilon,\varepsilon)_{z_i}$ of $M$ near $Z_{z_i}$, note that $z_i$ defines a trivialization $t_{z_i}:N^*Z_{z_i}\to\mathbb{R}$ of $N^*Z_{z_i}$. 

We can write a degree-$p$ $\mathcal{A}_i$ form $\mu\in {}^{\mathcal{A}_i}\Omega^p(M)$ as $$\mu = \theta+ \dfrac{dz_i\wedge\alpha_i}{z_i^2}\wedge A + \dfrac{dz_i}{z_i}\wedge B +\dfrac{\alpha_i}{z_i}\wedge C$$ and $\theta\in {}^{\mathcal{A}_{i-1}}\Omega^p(M)$, $A,B,C\in {}^{\mathcal{A}_{i-1}}\Omega^*(Z_{z_i})\simeq  {}^{\mathcal{A}_{i-1}}\Omega^*(Z_{z_i};|N^*Z_{z_i}|^{-1})\text{ by }t_{{z_i}^*}.$

We write $\mathscr{R}(\mu)=\theta$ and $\mathscr{S}(\mu)=\mu-\mathscr{R}(\mu)$ for `regular' and `singular' parts. It is easy to see that $\mathscr{R}(d\mu)=d(\mathscr{R}(\mu))$ and $\mathscr{S}(d\mu)=d(\mathscr{S}(\mu))$. Thus the trivialization $\tau$ induces a splitting ${}^{\mathcal{A}_i}\Omega^*(M)={}^{\mathcal{A}_{i-1}}\Omega^*(M)\oplus\mathscr{C}^*$ as complexes. As a consequence ${}^{\mathcal{A}_i}H^p(M)={}^{\mathcal{A}_{i-1}}H^p(M)\oplus  H^p(\mathscr{C}^{*})$ and we are left to compute the cohomology of the quotient complex.  

After identifying $\mathscr{C}^p=\left\{\mu\in{}^{\mathcal{A}_i}\Omega^p(M):\theta=0\right\}$, the differential is given by 
 $$d\mu= \dfrac{dz_i\wedge\alpha_i}{z_i^2}\wedge (dA-C) - \dfrac{dz_i}{z_i}\wedge dB -\dfrac{\alpha_i}{z_i}\wedge dC.$$ 

Thus $\ker(d:\mathscr{C}^{p}\to\mathscr{C}^{p+1})=\left\{C=dA,\hspace{2ex} dB=0\right\}$. If $d\mu=0$, then there is $$\tilde{\mu}=\dfrac{\alpha_i}{z_i}\wedge A-\dfrac{dz_i}{z_i}\wedge b\in\mathscr{C}^{p-1}$$ such that $d\left(-\dfrac{\alpha_i}{z_i}\wedge A-\dfrac{dz_i}{z_i}\wedge b\right)=\dfrac{dz_i\wedge\alpha_i}{z_i^2}\wedge A +\dfrac{\alpha_i}{z_i}\wedge dA-\dfrac{dz_i}{z_i}\wedge db$. Then $[\mu-d\tilde{\mu}]\in H^p(\mathscr{C})$ has representative $$\dfrac{dz_i}{z_i}\wedge(B-db).$$ 

This computation also shows that if $\mu=\dfrac{dz_i}{z_i}\wedge B\in d(\mathscr{C}^{p-1})$, then $\mu=d\nu$ for some $\nu\in\mathscr{C}^{p-1}$ where $$\nu=\dfrac{dz_i}{z_i}\wedge\dfrac{\alpha_i}{z_i}\wedge a +\dfrac{dz_i}{z_i}\wedge b +\dfrac{\alpha_i}{z_i}\wedge c $$ and $B=d b$. Thus $B$ is exact. Further, if two forms $\nu_1, \nu_2$ are representatives of the same cohomology class in $H^p(\mathscr{C})$, this shows that the coefficients of the expression $\nu_1-\nu_2$ must be exact. 
 
Let us consider the effect of changing the $Z_{z_i}$-defining function. By the change of $Z_{z_i}$ defining function computation found in Example 2.11 of \cite{Lanius}, the cohomology class $[B-db]$ is unambiguous despite a representative being expressed
using a particular $Z_{z_i}$ defining function. 

Thus $\displaystyle{H^p(\mathscr{C})=  \frac{\left\{{B\in{}^{\mathcal{A}_{i-1}}\Omega^{p-1}(M):dD=0}\right\}}{\left\{{B=db,b\in {}^{\mathcal{A}_{i-1}}\Omega^{p-2}(M)}\right\}}}$  
and  $${}^{\mathcal{A}_i}H^p(M)\simeq{}^{\mathcal{A}_{i-1}}H^p(M)\oplus {}^{\mathcal{A}_{i-1}}H^{p-1}(Z_{z_i}).$$

 Since ${}^{\mathcal{A}_1}H^p(M)\simeq H^p(M)\oplus H^{p-1}(Z_{z_1})$, we have that $${}^{\mathcal{A}_m}H^p(M)\simeq  H^p(M)\bigoplus_{\tau\in\mathscr{T}}H^{p-|\tau|}\big(\bigcap_{j\in \tau}Z_{z_j}\big)$$ where $\mathscr{T}$ denotes all  of the nonempty subsets of $\left\{1,\dots,|\widetilde{D}|\right\}$. 
\end{proof}

\begin{lemma}The Lie algebroid cohomology of $\mathcal{B}_m$ is isomorphic to $${}^{b}H^p(M)\oplus\bigoplus_{\mathscr{M}}H^{p-m}(\bigcap\underbrace{Z_{x_i}\cap Z_{y_i}\cap Z_{x_j}\cap Z_{y_k}\cap Z_{z_\ell}}_{i\in I,~j\in J,~ k\in K,~\ell\in L};\otimes\underbrace{|N_{v_i}^*Z_{x_i}|^{-1}\otimes|N_{v_i}^*Z_{y_i}|^{-1}}_{i\in I})$$ where $\mathscr{M}$ denotes all collections of sets $I,J,K,L$ satisfying $$I,J,K\subseteq\left\{1,\dots,k\right\}, L\subseteq\left\{1,\dots,n\right\}\text{ such that } I\neq\emptyset,\text{ and } I\cap J=I\cap K =\emptyset$$ with $m:=2|I|+|J|+|L|+|K|$ and $v_i=Z_{x_i}\cap  Z_{y_i}$. \end{lemma}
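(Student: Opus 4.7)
The plan is to induct on the pair index, passing from $\mathcal{A}_m$ to $\mathcal{B}_k$ by one pair rescaling at a time; the base case $\mathcal{B}_0 = \mathcal{A}_m$ is exactly Lemma~\ref{Alemma}. For the inductive step, I would use the fact that the Lie algebroid inclusion $\mathcal{B}_{i-1} \hookrightarrow \mathcal{B}_i$ (constructed in Theorem~2.2 of \cite{Lanius}) fits into a short exact sequence of de Rham complexes
$$0 \to {}^{\mathcal{B}_{i-1}}\Omega^*(M) \to {}^{\mathcal{B}_i}\Omega^*(M) \to \mathscr{D}^*_i \to 0,$$
in direct analogy with the sequence appearing in the proof of Lemma~\ref{Alemma}.

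To split this sequence, I would work in a good tubular neighborhood of $v_i := Z_{x_i} \cap Z_{y_i}$ with defining functions $x_i, y_i$ trivializing the conormal bundles. A $\mathcal{B}_i$-form near $v_i$ decomposes into a regular part in ${}^{\mathcal{B}_{i-1}}\Omega^*(M)$ and a singular part built from $\frac{dx_i \wedge dy_i}{x_i^2 y_i^2}$, $\frac{dx_i}{x_i y_i}$, $\frac{dy_i}{x_i y_i}$, $\frac{dx_i}{x_i}$, and $\frac{dy_i}{y_i}$ with coefficients pulled back from the appropriate intersections. As in Lemma~\ref{Alemma}, the projection to the singular part is a chain map, so the sequence splits as complexes and
$${}^{\mathcal{B}_i}H^p(M) \simeq {}^{\mathcal{B}_{i-1}}H^p(M) \oplus H^p(\mathscr{D}^*_i).$$

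The heart of the proof is the computation of $H^p(\mathscr{D}^*_i)$, modeled on the $\mathbb{T}^4$ calculation in Section~3. Expanding the top-order coefficient in powers of $x_i$ and $y_i$ modulo the defining ideals and imposing $d\mu = 0$, the closedness relations force the intermediate coefficients of $\frac{dx_i}{x_i y_i}$ and $\frac{dy_i}{x_i y_i}$ to be exterior derivatives of top-order coefficients (and hence not contribute new classes), while a parity analysis generalizing the $\cos = \pm 1$ argument of Section~3 reduces the top-order data to two independent cohomology classes. The expected outcome is
$$H^p(\mathscr{D}^*_i) \simeq {}^{\mathcal{B}_{i-1}}H^{p-1}(Z_{x_i}) \oplus {}^{\mathcal{B}_{i-1}}H^{p-1}(Z_{y_i}) \oplus {}^{\mathcal{B}_{i-1}}H^{p-2}(v_i;\mathcal{L}_i)^{\oplus 2},$$
where $\mathcal{L}_i = |N^*_{v_i} Z_{x_i}|^{-1} \otimes |N^*_{v_i} Z_{y_i}|^{-1}$ is the intrinsic coefficient line bundle whose appearance I would justify by the change-of-defining-function calculation in the style of Example~2.11 of \cite{Lanius}. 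I expect this step to be the main obstacle: tracking the singular terms, verifying that the mixed-parity classes die, and pinning down $\mathcal{L}_i$ intrinsically all require delicate bookkeeping.

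After the full induction, the cohomology takes the form
$${}^{\mathcal{B}_k}H^p(M) = {}^{\mathcal{A}_m}H^p(M) \oplus \bigoplus_{i=1}^{k} \Bigl[{}^{\mathcal{B}_{i-1}}H^{p-1}(Z_{x_i}) \oplus {}^{\mathcal{B}_{i-1}}H^{p-1}(Z_{y_i}) \oplus {}^{\mathcal{B}_{i-1}}H^{p-2}(v_i;\mathcal{L}_i)^{\oplus 2}\Bigr].$$
Applying Lemma~\ref{Alemma} recursively to the restrictions of $\mathcal{A}_m$ to each intersection and regrouping terms, I would verify combinatorially, by iterating the $\mathbb{T}^4$ pattern, that for every $i \in I$ exactly one of the two copies of the $v_i$-cohomology combines with the surrounding $Z_{x_j}, Z_{y_k}, Z_{z_\ell}$ contributions to reproduce the full ${}^bH^p(M)$ summand, while the second copy yields the genuinely new $\mathscr{M}$-indexed term with $m = 2|I| + |J| + |K| + |L|$ and coefficient bundle $\bigotimes_{i \in I} \mathcal{L}_i$ as asserted in the statement.
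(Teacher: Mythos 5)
Your proposal follows the paper's proof essentially step for step: the same induction on pair rescalings with base case Lemma \ref{Alemma}, the same short exact sequence split as complexes by the regular/singular decomposition in a good tubular neighborhood, the same $\mathbb{T}^4$-modeled computation of the quotient complex, the same change-of-defining-function argument to identify the coefficient bundle, and the same final regrouping. The one imprecision is your claim that the quotient complex contributes ${}^{\mathcal{B}_{i-1}}H^{p-2}(v_i;\mathcal{L}_i)^{\oplus 2}$: in the paper only the class carried by the coefficient of $\tfrac{dx_i\wedge dy_i}{x_i^2y_i^2}$ is twisted by $|N^*_{v_i}Z_{x_i}|^{-1}\otimes|N^*_{v_i}Z_{y_i}|^{-1}$, while the class carried by the coefficient of $\tfrac{dx_i\wedge dy_i}{x_iy_i}$ is an honest untwisted class on $v_i$ (it lives in the genuine log subcomplex), which is precisely why that copy, and not the other, can be absorbed into the untwisted ${}^{b}H^p(M)$ summand in the regrouping you describe.
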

\begin{proof} 
We set $\mathcal{B}_0=\mathcal{A}_m$. The bundle map $i: \mathcal{B}_{i}\to \mathcal{B}_{i-1}$ constructed in Theorem 2.2 of \cite{Lanius} is an inclusion of Lie algebroids and hence fits into a short exact sequence of complexes 
 $$0\to{}^{\mathcal{B}_{i-1}}\Omega^p(M)\to{}^{\mathcal{B}_i}\Omega^p(M)\to\mathscr{C}^p\to 0.$$
where  $$\mathscr{C}^p={}^{\mathcal{B}_{i}}\Omega^p(M)/{}^{\mathcal{B}_{i-1}}\Omega^p(M).$$ The differential on $^{\mathscr{C}}d$ is induced by the differential $^{\mathcal{B}_i}d$ on ${}^{\mathcal{B}_i}\Omega^{p}(M)$: if $P$ is the projection ${}^{\mathcal{B}_{i}}\Omega^p(M)\to{}^{\mathcal{B}_{i}}\Omega^p(M)/{}^{\mathcal{B}_{i-1}}\Omega^p(M),$ then  $^{\mathscr{C}}d(\eta)=P({}^{\mathcal{B}_i}d(\theta))$ where  $\theta\in{}^{\mathcal{B}_i}\Omega^{p}(M)$ is any form such that $P(\theta)=\eta$. Hence $({}^\mathscr{C}d)^2=0$ and $(\mathscr{C}^*,{}^\mathscr{C}d)$ is in fact a complex. 

Given good tubular neighborhoods $\tau_x=Z_{x_i}\times(-\varepsilon,\varepsilon)_{x_i}$ of $M$ near $Z_{x_i}$  and $\tau_y=Z_{y_i}\times(-\varepsilon,\varepsilon)_{y_i}$ of $M$ near $Z_{y_i}$, note that $z_{x_i}$ defines a trivialization $t_{x_i}:N^*Z_{x_i}\to\mathbb{R}$ of $N^*Z_{x_i}$ and $z_{y_i}$ defines a trivialization $t_{y_i}:N^*Z_{y_i}\to\mathbb{R}$ of $N^*Z_{y_i}$. 

We can write a degree-$p$ $\mathcal{B}_i$ form $\mu\in {}^{\mathcal{B}_i}\Omega^p(M)$ as $$\mu =\theta+\dfrac{dx_i\wedge dy_i}{x_i^2y_i^2}\wedge (A_{00}+A_{10}x_i+A_{01}y_i+A_{20}x^2+A_{02}y^2+A_{11}x_iy_i)$$ $$+\dfrac{dx_i}{x_iy_i}\wedge (B_0+B_1x_i)+\dfrac{dy_i}{x_iy_i}\wedge (C_0+C_1y_i)+\dfrac{dx_i}{x_i}\wedge D+\dfrac{dy_i}{y_i}\wedge E$$ where $\theta\in {}^{\mathcal{B}_{i-1}}\Omega^p(M)$, $$A_{k,l}\in {}^{\mathcal{B}_{i-1}}\Omega^*(Z_{x_i}\cap Z_{y_i})\simeq  {}^{\mathcal{B}_{i-1}}\Omega^*(Z_{x_i}\cap Z_{y_i};|N_{v_i}^*Z_{x_i}|^{-1}\otimes |N_{v_i}^*Z_{x_i}|^{-1} )$$ by $t_{{x_i}^*}$, $t_{{y_i}^*}$ where $v_i=Z_{x_i}\cap Z_{y_i}$, $$C_i,D\in {}^{\mathcal{B}_{i-1}}\Omega^*(Z_{x_i})\simeq  {}^{\mathcal{B}_{i-1}}\Omega^*(Z_{x_i};|N^*Z_{x_i}|^{-1})\text{ by }t_{{y_i}^*}, \text{ and }$$ $$B_i,E\in {}^{\mathcal{B}_{i-1}}\Omega^*(Z_{y_i})\simeq  {}^{\mathcal{B}_{i-1}}\Omega^*(Z_{y_i};|N^*Z_{y_i}|^{-1})\text{ by }t_{{x_i}^*}.$$ Further, $B_0$ is independent of $x$ and $C_0$ is independent of $y$. 

We write $\mathscr{R}(\mu)=\theta$ and $\mathscr{S}(\mu)=\mu-\mathscr{R}(\mu)$ for `regular' and `singular' parts. It is easy to see that $\mathscr{R}(d\mu)=d(\mathscr{R}(\mu))$ and $\mathscr{S}(d\mu)=d(\mathscr{S}(\mu))$. Thus the trivializations $\tau_{x_i},\tau_{y_i}$ induce a splitting ${}^{\mathcal{B}_i}\Omega^*(M)={}^{\mathcal{B}_{i-1}}\Omega^*(M)\oplus\mathscr{C}^*$ as complexes. As a consequence ${}^{\mathcal{B}_i}H^p(M)={}^{\mathcal{B}_{i-1}}H^p(M)\oplus  H^p(\mathscr{C}^{*})$ and we are left to compute the cohomology of the quotient complex.  

After identifying $\mathscr{C}^p=\left\{\mu\in{}^{\mathcal{B}_i}\Omega^p(M):\theta=0\right\}$, the differential is given by $d\mu =$ $$\dfrac{dx_i\wedge dy_i}{x_i^2y_i^2}\wedge (dA_{00}+(dA_{10}+B_0)x_i+(dA_{01}-C_0)y_i+(dA_{20}+B_1)x^2+(dA_{02}-C_1)y^2)$$ 
$$+\dfrac{dx_i\wedge dy_i}{x_i^2y_i^2}\wedge dA_{11}x_iy_i-\dfrac{dx_i}{x_iy_i}\wedge dB -\dfrac{dy_i}{x_iy_i}\wedge dC-\dfrac{dx_i}{x_i}\wedge dD-\dfrac{dy_i}{y_i}\wedge dE.$$ Thus $ker(d:\mathscr{C}^p\to\mathscr{C}^{p+1})$ can be identified with the relations $$dA_{00}=0,\hspace{2ex} B_0=-dA_{10}, \hspace{2ex} B_1=-dA_{20}, \hspace{2ex} C_0=dA_{01},$$
$$C_1=dA_{20}, \hspace{2ex} dA_{11}=0, \hspace{2ex} dD=0, \hspace{2ex} dE=0$$ even though $B_1$ could depend on $x_i$ and $C_1$ could depend on $y_i$ above. 

If $d\mu=0$, then there is an element $\tilde{\mu}$ in $\mathscr{C}^{p-1}$, $$\tilde{\mu}=-\dfrac{dx_i}{x_iy_i}\wedge (A_{10}+A_{20}x_i)+\dfrac{dy_i}{x_iy_i}\wedge (A_{01}+A_{02}y_i)+\dfrac{dx_i\wedge dy_i}{x_i^2y_i^2}(a_{00}+a_{11}x_iy_i)$$ $$+\dfrac{dx_i}{x_i}\wedge\delta+\dfrac{dy_i}{y_i}\wedge e$$ such that  $\mu-d\tilde{\mu}$ equals $$\dfrac{dx_i\wedge dy_i}{x_i^2y_i^2}\wedge \bigg((A_{00}-da_{00})+(A_{11}-da_{11})x_iy_i\bigg)+\dfrac{dx_i}{x_i}\wedge(D+d\delta)+\dfrac{dy_i}{y_i}\wedge(E+de).$$ 

This computation also shows that if   $$\mu=\dfrac{dx_i\wedge dy_i}{x^2_iy^2_i}\wedge (A_{00}+A_{11}x_iy_i)+\dfrac{dx_i}{x_i}\wedge D+\dfrac{dy_i}{y_i}\wedge E$$ is in $ d(\mathscr{C}^{p-1}),$ then $\mu=d\nu$ for some $\nu\in\mathscr{C}^{p-1}$ where $$\nu=\dfrac{dx_i\wedge dy_i}{x_i^2y_i^2}\wedge (a_{00}+a_{10}x_i+a_{01}y_i+a_{20}x_i^2+a_{02}y_i^2+a_{11}x_iy_i)$$ 
$$ +\dfrac{dx_i}{x_iy_i}\wedge (b_0+b_1x_i)+\dfrac{dy_i}{x_iy_i}\wedge (c_0+c_1y_i)+\dfrac{dx_i}{x_i}\wedge \delta+\dfrac{dy_i}{y_i}\wedge e$$ and $A_{00}=da_{00}$, $A_{11}=da_{11}$, $D=d\delta$, and $E=de$. Thus if two forms $\nu_1, \nu_2$ are representatives of the same cohomology class in $H^p(\mathscr{C}^p)$, then the coefficients of the expression $\nu_1-\nu_2$ must be exact.

Note that $\dfrac{dx_i\wedge dy_i}{x_iy_i}\wedge (A_{11}-da_{11})+\dfrac{dx_i}{x_i}\wedge(D+d\delta)+\dfrac{dy_i}{y_i}\wedge(E+de)$ is a $\log \widetilde{D}_i$ symplectic form for $\widetilde{D}_i=\left\{Z_{z_1},\dots,Z_{z_\ell},Z_{x_1},Z_{y_1},\dots,Z_{x_j},Z_{y_j}\right\}$. Further, ${}^{B_i}\Omega^p(M)$ splits as ${}^{\log \widetilde{D}_i}\Omega^p(M)\oplus\mathscr{D}^p$ for the appropriate quotient complex $\mathscr{D}^p$. Thus we know that the representatives $ (A_{11}-da_{11}), (D+d\delta)$, and $(E+de)$ are invariant under change of $Z_{x_i}$ and $Z_{y_i}$ defining function. 

Thus we are left to compute what happens to $$\dfrac{dx_i\wedge dy_i}{x_i^2y_i^2}\wedge (A_{00}-da_{00})$$ under change of $Z_{x_i}$ and $Z_{y_i}$ defining function. By this computation, which occurs at the conclusion of the proof of theorem 2.15 in \cite{Lanius}, $$\dfrac{dx_i\wedge dy_i}{x_i^2y_i^2}\wedge (A_{00}-da_{00})$$ can be identified as an element of  $${}^{\mathcal{B}_{i-1}} H^{p-2}(Z_{x_i}\cap Z_{y_i})\simeq {}^{\mathcal{B}_{i-1}} H^{p-2}(Z_{x_i}\cap Z_{y_i};|N_{v_i}^*Z_{x_i}|^{-1}\otimes |N_{v_i}^*Z_{y_i}|^{-1})$$ trivialized by $t_{{x_i}^*}$ and $t_{{y_i}^*}$, and where $v_i=Z_{x_i}\cap Z_{y_i}$. Thus $${}^{\mathcal{B}_{i}}H^p(M)\simeq{}^{\mathcal{B}_{i-1}} H^p(M)\oplus{}^{\mathcal{B}_{i-1}} H^{p-1}(Z_{x_i})\oplus{}^{\mathcal{B}_{i-1}} H^{p-1}(Z_{y_i})\oplus{}^{\mathcal{B}_{i-1}} H^{p-2}(Z_{x_i}\cap Z_{y_i})$$ $$\oplus {}^{\mathcal{B}_{i-1}} H^{p-2}(Z_{x_i}\cap Z_{y_i};|N^*Z_{x_i}|^{-1}\otimes |N^*Z_{y_i}|^{-1}).$$ 

 Since $\mathcal{A}_m=\mathcal{B}_0$, by using Lemma \ref{Alemma} we can conclude that the cohomology  ${}^{\mathcal{B}_m}H^p(M)$ is
 $${}^{b}H^p(M)\oplus\bigoplus_{\mathscr{M}}H^{p-m}(\bigcap\underbrace{Z_{x_i}\cap Z_{y_i}\cap Z_{x_j}\cap Z_{y_k}\cap Z_{z_\ell}}_{i\in I,~j\in J,~ k\in K,~\ell\in L};\otimes\underbrace{|N_{v_i}^*Z_{x_i}|^{-1}\otimes|N_{v_i}^*Z_{y_i}|^{-1}}_{i\in I})$$  where $\mathscr{M}$ denotes all collections of sets $I,J,K,L$ satisfying $$I,J,K\subseteq\left\{1,\dots,k\right\}, L\subseteq\left\{1,\dots,n\right\}\text{ such that } I\neq\emptyset,\text{ and } I\cap J=I\cap K =\emptyset$$ with $m:=2|I|+|J|+|L|+|K|$ and $v_i$ denotes $Z_{x_i}\cap  Z_{y_i}$. \end{proof} 
 
 Since $\mathcal{B}_m={}^{\log}\mathcal{R}$, we have reached the conclusion of Theorem \ref{theorem01}.

\end{document}